\definecolor{subsectioncolor}{rgb}{0.1, 0.1, 0.5}
\let\proof\@undefined
\let\endproof\@undefined
\newcommand{\cL}{{\cal L}}
\newcommand{\cM}{{\cal M}}
\newcommand{\cN}{{\cal N}}
\newcommand{\cZ}{{\cal Z}}
\newcommand{\cC}{{\cal C}}
\newcommand{\cK}{{\cal K}}
\newcommand{\cF}{{\cal F}}
\newcommand{\cA}{{\cal A}}
\newcommand{\cP}{{\cal P}}
\newcommand{\cH}{{\cal H}}
\newcommand{\mC}{{\mathbb C}}
\newcommand{\mR}{{\mathbb R}}
\newcommand{\mE}{{\mathbb E}}
\newcommand{\mU}{{\mathbb U}}
\newcommand{\bG}{{\mathbf G}}
\newcommand{\bD}{{\mathbf D}}
\newcommand{\bQ}{{\mathbf Q}}
\newcommand{\bb}{{\mathbf b}}
\newcommand{\bV}{{\mathbf V}}
\newcommand{\bU}{{\mathbf U}}
\newcommand{\bs}{{\mathbf s}}
\newcommand{\bF}{{\mathbf F}}
\newcommand{\bP}{{\mathbf P}}
\newcommand{\bK}{{\mathbf K}}
\newcommand{\bp}{{\mathbf p}}
\newcommand{\bW}{{\mathbf W}}
\newcommand{\bM}{{\mathbf M}}
\newcommand{\bff}{{\mathbf f}}
\newcommand{\bh}{{\mathbf h}}
\newcommand{\bw}{{\mathbf w}}
\newcommand{\bx}{{\mathbf x}}
\newcommand{\bC}{{\boldsymbol{\mathcal C}}}
\newcommand{\bX}{{\mathbf X}}
\newcommand{\bE}{{\mathbf E}}
\newcommand{\bg}{{\mathbf g}}
\newcommand{\bA}{{\mathbf A}}
\newcommand{\bB}{{\mathbf B}}
\newcommand{\bu}{{\mathbf u}}
\newcommand{\bL}{{\mathbf L}}
\newcommand{\bd}{{\mathbf d}}
\newcommand{\bz}{{\mathbf z}}
\newcommand{\bR}{{\mathbf R}}
\newcommand{\bS}{{\mathbf S}}
\newcommand{\bJ}{{\mathbf J}}
\newcommand{\bc}{{\mathbf c}}
\newtheorem{approximation}{Approximation}
\newtheorem{theorem}{Theorem}
\newtheorem{definition}{Definition}
\newtheorem{lemma}{Lemma}
\newtheorem{assumption}{Assumption}
\newtheorem{remark}[theorem]{Remark}
\newtheorem{corollary}{Corollary}
\newtheorem{proposition}{Proposition}
\begin{document}

\title{When Koopman Meets Hamilton and Jacobi}

\author{ Umesh Vaidya, {\it IEEE Senior Member}
\thanks{
Financial support from NSF CPS award 1932458 and NSF 2031573 is greatly acknowledged. The author is with the Department of Mechanical Engineering, Clemson University, Clemson SC, 29631. }}

\maketitle


\begin{abstract}
In this paper, we establish a connection between the spectral theory of the Koopman operator and the solution of the Hamilton Jacobi (HJ) equation.  
The HJ equation occupies a central place in systems theory, and its solution is of interest in various control problems, including optimal control, robust control, and input-output analysis. A Hamiltonian dynamical system can be associated with the HJ equation and the solution of the HJ equation can be extracted from the Hamiltonian system in the form of Lagrangian submanifold. 
One of the main contributions of this paper is to show that the Lagrangian submanifolds can be obtained using the spectral analysis of the Koopman operator. We present two different procedures for the approximation of the HJ solution. We utilize the spectral properties of the Koopman operator associated with the uncontrolled dynamical system and Hamiltonian systems to approximate the HJ solution. We present a convex optimization-based computational framework with convergence analysis for approximating the Koopman eigenfunctions and the Lagrangian submanifolds.  
Our solution approach to the HJ equation using Koopman theory provides for a natural extension of results from linear systems to nonlinear systems.
We demonstrate the application of this work for solving the optimal control problem. 
Finally, we present simulation results to validate the paper's main findings and compare them against linear quadratic regulator and Taylor series based approximation controllers. 
\end{abstract}

\begin{IEEEkeywords}
Koopman Operator, Hamilton Jacobi Equation, Optimal  Control.
\end{IEEEkeywords}

\section{Introduction}
The  Hamilton Jacobi (HJ) equation is at the heart of several problems of interest in systems and control theory. The HJ equation arises in optimal control, robust $H_\infty$ control, dissipativity-based analysis of an input-output system, and control of systems with an adversary or min-max dynamic games\cite{van_der_shaft_book}. The HJ equation and its discrete-time counterpart, the HJ Bellman (HJB) equation, have attracted renewed attention due to the significance of this equation in data-driven control and Reinforcement learning problems \cite{bertsekas1996stochastic,sutton2018reinforcement}.
The HJ equation is a nonlinear partial differential equation (PDE), and given the significance of the HJ equation in systems theory, a variety of methods are developed for the approximation of its solution \cite{achdou2013hamilton}.    \\

\noindent{\it Literature review}: Given the nonlinear nature of the HJ PDE, the analytical solution is impossible, and one has to resort to a numerical scheme for its approximation. {\color{black} One of the popular numerical methods provides an iterative approach to solving the HJ equation. The iterative approach alternates between solving a linear PDE for the value function for a fixed control input and then updating the control input using the value function. The linear PDE is solved using Galerkin projection onto the finite-dimensional basis function for the approximate value function. \cite{beard1997galerkin}.} 
This iterative approach for solving optimal control problems via the HJB equation and the Bellman equation plays a fundamental role in various RL algorithms, including policy iteration, value iteration, and actor-critic method \cite{kumar2009computational}. 
Another line of research involving viscosity-based approximate solution to the HJ equation is proposed in \cite{crandall1992user,bardi2008optimal}. The viscosity-based solution is weaker than the classical, differentiable solution of the HJ equation. An approximate suboptimal solution of the HJ equation based on the series expansion of higher-order nonlinear terms is proposed in \cite{garrard1969additional,garrard1977design,lukes1969optimal,sannomiya1971method,navasca2000solution,al1961optimal}.\\ 
\noindent {\it Differential geometric viewpoint of HJ equation and Koopman theory}: An alternate approach for analyzing and approximating the HJ equation is based on a differential geometric-based interpretation of its solution. It is well known that a Hamiltonian dynamical system is associated with the HJ equation. The Lagrangian submanifolds of the Hamiltonian dynamical system, which are invariant manifolds, are used to construct the solution of the HJ equation. 
This differential geometric viewpoint is exploited to solve the optimal control, $H_\infty$ control, and ${\cal L}_2$ gain analysis and synthesis problems in \cite{van_der_shaft_book}. {\color{black} In \cite{sakamoto2008analytical}, the authors have exploited this differential geometric approach to develop computational methods for approximating the HJ solution. The methods we discovered in this paper for approximating HJ solution draw parallels to the techniques found in \cite{sakamoto2008analytical}. In particular, our first approach is similar to the approximation procedure developed in \cite{sakamoto2008analytical} relying on decomposing Hamiltonian into integrable and nonintegrable parts.}
In this paper, we show a strong connection between the differential geometric viewpoint of the HJ equation and the spectral analysis of the Koopman operator.  
The development of the Koopman operator was originally motivated for studying the ensemble or statistical behavior of conservative dynamical systems \cite{koopman1931hamiltonian}. However, the spectral properties of the Koopman operator have an intimate connection to the state space geometry \cite{mezic2017koopman,mezic2021koopman}. In particular, the invariant manifolds of the dynamical system are obtained as joint zero-level sets of the eigenfunctions. 
The recent work involving the Koopman operator for dynamical systems with dissipation provides a way of characterizing stable, unstable manifolds of nonlinear dynamical systems in terms of the zero-level sets of Koopman eigenfunctions \cite{mezic2020spectrum}. The explosion of research activities in Koopman theory provides for systematic data-driven and model-based methods for the computation of Koopman eigenfunctions \cite{housparse,klus2020eigendecompositions,korda2018convergence,korda2020optimal}. 
On the other hand, there is also extensive literature on the use of Koopman theory for control \cite{mauroy2016global,peitz2019koopman,villanueva2021towards,borggaard2009control, abraham2019active,korda2018linear, sootla2018optimal,otto2021koopman,fackeldey2020approximative,kaiser2021data,ma2019optimal}. However, one of the fundamental challenges with the current approaches to using Koopman theory for control is the bilinear nature of the Koopman-based lifting of control dynamical system. The bilinear lifting is one of the main hurdles in extending linear system tools as they inhibit the development of convex or linear methods for nonlinear control. This is in contrast to the convex framework for control design using the Perron-Frobenius operator, dual to the Koopman operator discovered in \cite{huang2022convex,yu2022data,moyalan2021sum,vaidya2023data}. {\color{black} In \cite{villanueva2021towards}, the authors have proposed the Koopman-based lifting of the Hamiltonian dynamical system arising from the Pontryagin maximum principle. One of the main focuses of \cite{villanueva2021towards} is to prove the symplectic structure of the lifted Hamiltonian system in the function space for optimal control design. The Hamiltonian system is also the main focus of this paper. However, unlike \cite{villanueva2021towards}, we discover a relationship between the spectral properties of the Koopman operator and the state space geometry of the Hamiltonian system in the form of Lagrangian submanifold for optimal control design. Unlike \cite{villanueva2021towards}, we do not rely on the infinite-dimensional linear lifting of the Hamiltonian system but propose the use of principal eigenfunctions of the Koopman operator for optimal control design. Furthermore, the use of principal eigenfunctions of the Koopman operator to discover the integrable structure of the Hamiltonian system is one of the novel contributions of this paper.}\\
\noindent {\it Main Contributions}: The main contributions of this paper are as follows. We provide two procedures for constructing the Lagrangian submanifold and the HJ solution based on spectral analysis of the Koopman operator. In our first procedure, we show that the Koopman eigenfunctions of the uncontrolled system can be used to decompose the Hamiltonian associated with the HJ equation into integrable and non-integrable parts. The integrable part of the Hamiltonian system can be solved exactly. However, for the non-integrable part, we make certain approximations leading to the approximate solution of the HJ equation. \\
The decomposition of the Hamiltonian dynamical system into integrable and non-integrable parts using the first integral of motion is proposed in \cite{sakamoto2008analytical} for approximating the HJ solution. However, unlike \cite{sakamoto2008analytical}, we present a systematic convex optimization-based approach with rigorous results on the convergence analysis for the approximation of Koopman eigenfunctions and Lagrangian submanifold. Furthermore, unlike \cite{sakamoto2008analytical}, the approximation of the Lagrangian submanifold and the HJ solution we obtained are time-independent. \\
Our second procedure for approximating the Lagrangian submanifold and the HJ solution uses the Koopman eigenfunctions of the Hamiltonian dynamical system associated with the HJ equation. In particular, the zero-level curves of the Koopman eigenfunctions are used to determine the Lagrangian submanifolds. \\
The second procedure involves computing the Koopman eigenfunction of a higher, $2n$-dimensional Hamiltonian system compared to our first procedure involving $n$-dimensional uncontrolled system. We show that the Riccatti solution corresponding to the linearized HJ equation is obtained from both methods as a particular case of the approximate HJ solution. This specific case is when only linear basis functions are used to approximate the Koopman eigenfunctions. Hence, the proposed Koopman-based approach for analyzing the HJ equation provides a natural extension of the linear system results to nonlinear systems. Finally, we demonstrate the application of the developed framework to optimal control problems.  
This paper is an extended version of \cite{vaidyacdc2022}. In particular, the first procedure based on the decomposition of Hamiltonian into an integrable and non-integrable part is new to this paper. Similarly, the results presented in this paper for procedure two are stronger than those presented in \cite{vaidyacdc2022}. Finally, rigorous convergence analysis for approximating the Koopman eigenfunctions is new to this paper. 

\section{Preliminaries and Notations}\label{section_prelim}
In this section, we present some preliminaries on the Hamiltonian dynamical system, the HJ equation, and the spectral theory of the Koopman operator. The preliminaries will also establish a connection between the Hamiltonian dynamics-based symplectic geometry framework and the solution of the HJ equation. We refer the readers to \cite{van_der_shaft_book,Lasota,abraham2008foundations,mezic2020spectrum} for further details on the preliminaries.\\

\noindent {\bf Notations}: $\mR^n$ denotes the $n$ dimensional Euclidean space. We denote by  ${\cal C}^k$ the space of $k$-times continuously differentiable functions and ${\cal C}^0$ the space of continuous function. $\mC$ denotes the set of complex numbers. 
We denote by $\bs_t(\bz)$ and $\bs_t(\bx)$ the solutions of  systems, $\dot \bz=\bF(\bz)$ and $\dot \bx=\bff(\bx)$, at time $t$ with initial condition $\bz$ and $\bx$ respectively.  

\subsection{Lagrangian Submanifold} \label{section_lagsubmanifold} 

Let $\bx=(x_1,\ldots,x_n)^\top\in \cM\subseteq \mR^n$ an $n$-dimensional space and $(\bx,\bp)=(x_1,\ldots, x_n,p_1,\ldots,p_n)$ as the cotangent bundle $T^\star\cM$. The Hamiltonian vector field is defined using a Hamiltonian function $H: T^\star \cM\to \mR$ as follows. 
\begin{align}
\dot \bx=&\frac{\partial H(\bx,\bp)}{\partial \bp}\nonumber\\
\dot \bp=&-\frac{\partial H(\bx,\bp)}{\partial \bx}.\label{hamiltonsystem}
\end{align}
Let $(\bx,\bp)=(0,0)$ be the equilibrium point of the Hamiltonian. From the property of the Hamiltonian dynamical system it follows that the equilibrium at the origin is either elliptic-type (i.e., all the eigenvalues of the linearization on the imaginary axis) or saddle-type (with eigenvalues forming a mirror image along the imaginary axis). The Hamiltonian dynamical system that arises in the context of the HJ equation, the equilibrium point at the origin is of saddle-type and hence we assume that the origin is a saddle equilibrium point \cite{van_der_shaft_book}. 

Before providing a formal definition of the Lagrangian submanifold, we provide an informal definition of Lagrangian submanifold and its connection to the HJ solution. 
The $2n$-dimensional Hamiltonian system (\ref{hamiltonsystem}) has $n$-dimensional stable (unstable), $\cM_{s(u)}$ manifold associated with the saddle-type equilibrium point at the origin. The Lagrangian submanifolds, $\cL$, are subsets of these invariant manifolds, which can be written as $\cL=\{(\bx,\bp)\in \cM_{u(s)}: (\bx,\bp=\frac{\partial V}{\partial \bx}^\top)\}$ for some scalar-valued function $V:\cM\to \mR$ i.e., these manifolds can be parameterized in terms of $\bx$ variable only. In Section \ref{section Main results}, we show the connection between the HJ equation and the Hamiltonian dynamical system (\ref{hamiltonsystem}). It is known that the scalar value function $V$ used in defining the Lagrangian submanifold $\cL$ will qualify as the solution of the HJ equation.


For a more formal definition of the Lagrangian submanifold, it is necessary to involve concepts from differential geometry. The state space of the Hamiltonian system (\ref{hamiltonsystem}) i.e., $T^\star\cM$ is a symplectic manifold with symplectic 2-form given by $\omega=\sum_i^n dp_i\wedge x_i $ (Refer to \cite{arnold2012geometrical,abraham2008foundations}) for more details on symplectic manifolds, definition of $2$-form, and the wedge product $\wedge $. 
\begin{definition}[Lagrangian submanifold]\label{def_lagrangianmanifold}
 An n-dimensional submanifold $\cL$ of $T^\star \cM$ is Lagrangian if $\omega$
 restricted to $\cL$ is zero.
\end{definition}
Now consider any $\cC^2$ function $V : \cM \to\mR$, and the $n$-dimensional submanifold
$\cL_V \subset T^\star \cM$, in local coordinates given as
\begin{align}
\cL_V=\left\{(\bx,\bp)\in T^\star \cM:\;\;\bp-\frac{\partial V}{\partial x}^\top=0\right\}.\label{lag_par}
\end{align}
It follows that $\cL_V$ is a Lagrangian submanifold as the $2$-form $\omega$ restricted to $\cL_V$ is zero. Note that Lagrangian submanifold $\cL_V$ in (\ref{lag_par}) is parameterized by $\bx$ coordinates only.   The converse of the above statement is also true \cite[Proposition 11.1.2] {van_der_shaft_book}. The Lagrangian submanifold can be obtained as 
an invariant manifold of the Hamiltonian system (\ref{hamiltonsystem}). A submanifold $\cN\subset T^\star \cM$ is invariant manifold of (\ref{hamiltonsystem}) if solutions starting on $\cN$ remains in $\cN$. We have the following Proposition from \cite{van_der_shaft_book}.    
\begin{proposition}\label{proposition_lagsubmanifold}
Let $S: \cM\to \mR$ and consider the submanifold $\cL_S\subset T^\star\cM$ of the form (\ref{lag_par}). Then
\begin{align}
H\left(\bx,\frac{\partial S}{\partial \bx}^\top\right)={\rm constant},\;\;\forall \bx\in \cM, \label{hh}
\end{align}
if and only if $\cL_S$ is an invariant submanifold of the Hamiltonian system (\ref{hamiltonsystem}).
\end{proposition}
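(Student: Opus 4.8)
The plan is to exploit the fact that $\cL_S$ is a graph over $\cM$: every point of $\cL_S$ is of the form $\left(\bx,\frac{\partial S}{\partial \bx}^\top\right)$, so $\cL_S$ is invariant for (\ref{hamiltonsystem}) precisely when the Hamiltonian vector field is everywhere tangent to this graph. First I would translate tangency into an analytic condition. If $(\bx(t),\bp(t))$ is a trajectory that stays on $\cL_S$, then $\bp(t)=\frac{\partial S}{\partial \bx}(\bx(t))^\top$ for all $t$, and differentiating in $t$ gives $\dot\bp=\frac{\partial^2 S}{\partial \bx^2}\dot\bx$. Substituting the dynamics (\ref{hamiltonsystem}), namely $\dot\bx=\frac{\partial H}{\partial \bp}$ and $\dot\bp=-\frac{\partial H}{\partial \bx}$ evaluated at $\left(\bx,\frac{\partial S}{\partial \bx}^\top\right)$, the invariance of $\cL_S$ is therefore equivalent to the pointwise condition
\begin{align}
\frac{\partial H}{\partial \bx}\left(\bx,\tfrac{\partial S}{\partial \bx}^\top\right)+\frac{\partial^2 S}{\partial \bx^2}\,\frac{\partial H}{\partial \bp}\left(\bx,\tfrac{\partial S}{\partial \bx}^\top\right)=0,\qquad \forall\,\bx\in\cM. \label{invcond}
\end{align}

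The key observation is that the left-hand side of (\ref{invcond}) is exactly the gradient of the scalar function $g(\bx):=H\!\left(\bx,\frac{\partial S}{\partial \bx}(\bx)^\top\right)$. Indeed, writing $g$ as the composition of $H$ with the map $\bx\mapsto\left(\bx,\frac{\partial S}{\partial \bx}^\top\right)$ and applying the chain rule yields $\nabla g(\bx)=\frac{\partial H}{\partial \bx}+\left(\frac{\partial^2 S}{\partial \bx^2}\right)^{\!\top}\frac{\partial H}{\partial \bp}$, where the Jacobian of $\bx\mapsto\frac{\partial S}{\partial \bx}^\top$ is the Hessian $\frac{\partial^2 S}{\partial \bx^2}$. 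Since $S\in\cC^2$, this Hessian is symmetric, so $\left(\frac{\partial^2 S}{\partial \bx^2}\right)^{\!\top}=\frac{\partial^2 S}{\partial \bx^2}$ and $\nabla g$ coincides with the left-hand side of (\ref{invcond}). Hence (\ref{invcond}) is nothing but $\nabla g(\bx)=0$ for all $\bx$.

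With this identification both implications follow at once. If $\cL_S$ is invariant, then (\ref{invcond}) holds, so $\nabla g\equiv 0$ on $\cM$; assuming $\cM$ is connected, $g$ is constant, which is (\ref{hh}). Conversely, if $H\!\left(\bx,\frac{\partial S}{\partial \bx}^\top\right)$ is constant, then $\nabla g\equiv 0$, which is precisely (\ref{invcond}), and the tangency argument shows the vector field (\ref{hamiltonsystem}) is tangent to $\cL_S$ everywhere; since $\cL_S$ is a closed graph, the trajectory through any of its points remains on it, so $\cL_S$ is invariant. I expect the only delicate point to be the bookkeeping of transposes and gradients, together with the recognition that the obstruction to invariance in (\ref{invcond}) is exactly $\nabla g$; this matching relies crucially on the symmetry of $\frac{\partial^2 S}{\partial \bx^2}$. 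The remaining step, passing from a vanishing gradient to a constant, is routine once connectedness of $\cM$ is assumed.
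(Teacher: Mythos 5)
Your proof is correct. The paper itself offers no proof of this proposition — it is imported directly from the cited reference (van der Schaft, Proposition 11.1.2) — and your argument is exactly the standard one used there: translate invariance into tangency of the Hamiltonian vector field to the graph $\bp=\frac{\partial S}{\partial \bx}^\top$, recognize via the chain rule (and symmetry of the Hessian of the $\cC^2$ function $S$) that the resulting obstruction is precisely $\nabla_\bx\, H\bigl(\bx,\frac{\partial S}{\partial \bx}^\top\bigr)$, and pass between vanishing gradient and constancy using connectedness of $\cM$, with closedness of the graph upgrading tangency to genuine invariance.
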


\subsection{Spectral theory of Koopman operator}\label{section_Koopmanspectrum}
In this section, we provide a brief overview of existing results on the spectral theory of the Koopman operator. For more details on this topic, refer to \cite{mezic2020spectrum,mezic2021koopman}. Consider the dynamical system
\begin{align}
    \dot \bz={\bf F}(\bz),\label{odesys}
\end{align}
defined on a state space ${\cal Z}\subseteq \mR^p$.  The vector field $\bF$ is assumed to be smooth function. Let $\cF\subseteq \cC^0$ be the function space of observable $\psi: \cZ\to \mC$.
We have following definitions for the Koopman operator and its spectrum. 
\begin{definition}[Koopman Operator]
{\color{black} 
The family of Koopman operators $\mathbb{U}_t:\cF\to \cF$ corresponding to~\eqref{odesys} is defined as
\begin{equation}
[\mathbb{U}_t \psi](\bz) = \psi(\bs_t(\bz)). \label{koopman_operator}
\end{equation}
If in addition $\psi$ is continuously differentiable, then $f(\bz,t):=[\mathbb{U}_t \psi ](\bz)$ satisfies a partial differential equation~\cite{Lasota}:
\begin{align}
\frac{\partial f}{\partial t} = \frac{\partial f}{\partial \bz} \bF := \cK_\bF f \label{Koopmanpde}
\end{align}
with the initial condition $f(\bz,0)=\psi(\bz)$. The operator $\cK_\bF$ is the infinitesimal generator of $\mathbb{U}_t$, i.e.,
\begin{equation}
{\cal K}_{\bf F} \psi = \lim_{t \to 0} \frac{(\mathbb{U}_t - I)\psi}{t}. \label{K_generator}
\end{equation}
}
\end{definition}
It is easy to check that each $\mU_t$ is a linear operator on the space of functions, $\cF$.  
\begin{definition}\label{definition_koopmanspectrum}[Eigenvalues and Eigenfunctions of Koopman] A function $\psi_\lambda(\bz)$, assumed to be at least $\cC^1$,  is said to be an eigenfunction of the Koopman operator associated with eigenvalue $\lambda$ if
\begin{eqnarray}
[\mU_t \psi_\lambda](\bz)=e^{\lambda t}\psi_\lambda(\bz)\label{eig_koopman}.
\end{eqnarray}
Using the Koopman generator, the (\ref{eig_koopman}) can be written as 
\begin{align}
    \frac{\partial \psi_\lambda}{\partial \bz}{\bf F}=\lambda \psi_\lambda\label{eig_koopmang}.
\end{align}
\end{definition}
The eigenfunctions and eigenvalues of the Koopman operator enjoy the following property \cite{mezic2020spectrum,budivsic2012applied}. 
The spectrum of the Koopman operator, in general, is very complex.
Furthermore, the spectrum depends on the underlying functional space used in the approximation \cite{mezic2020spectrum}.
In this paper, we are interested in approximating the eigenfunctions of the Koopman operator with associated eigenvalues, the same as that of the linearization of the nonlinear system at the equilibrium point. With the hyperbolicity assumption on the equilibrium point of the system (\ref{odesys}), this part of the spectrum of interest to us is  well-defined. In the following discussion, we summarize the results from \cite{mezic2020spectrum} relevant to this paper and justify some of the claims made above on the spectrum of the Koopman operator. 

Equations (\ref{eig_koopman}) and (\ref{eig_koopmang}) provide a general definition of the Koopman spectrum. However, the spectrum can be defined over finite time or over a subset of the state space. The spectrum of interest to us in this paper could be well-defined over the subset of the state space. 
\begin{definition}[Open Eigenfunction \cite{mezic2020spectrum}]\label{definition_openeigenfunction}
Let $\psi_\lambda: \bC\to \mC$, where $\bC\subset \cZ$ is not an invariant set. Let $\bz\in  \bC$, and
$\tau \in (\tau^-(\bz),\tau^+(\bz))= I_\bz$, a connected open interval such that $\tau (\bx) \in \bC$ for all  $\tau \in I_\bz$.
If
\begin{align}[\mU_\tau \psi_\lambda](\bz) = \psi_\lambda(\bs_\tau(\bz)) =e^{\lambda \tau}  \psi_\lambda (\bz),\;\;\;\;\forall \tau \in I_\bz. 
\end{align}
Then $\psi_\lambda(\bz)$ is called the open eigenfunction of the Koopman operator family $\mU_t$, for $t\in \mR$ with eigenvalue $\lambda$. 
\end{definition}
\noindent 1. If $\bC$ is a proper invariant subset of $\cZ$ in which case $I_\bz=\mR$ for every $\bz\in \bC$, then $\psi_\lambda$ is called the subdomain eigenfunction. If $\bC=\cZ$ then $\psi_\lambda$ will be the ordinary eigenfunction associated with eigenvalue $\lambda$ as defined in (\ref{eig_koopman}). \\

\noindent 2. The open eigenfunctions as defined above can be extended from $\bC$ to a larger reachable set when $\bC$ is open based on the construction procedure outlined in  \cite[Definition 5.2, Lemma 5.1]{mezic2020spectrum}. Let $\cP_\bz$ be that larger domain.\\

\noindent 3. The eigenvalues of the linearization of the system dynamics at the origin, i.e., $\bE$, will form the eigenvalues of the Koopman operator \cite[Proposition 5.8]{mezic2020spectrum}. Our interest will be in constructing the corresponding eigenfunctions, defined over the domain $\cP_\bz$. We will refer to these eigenfunctions as {\it principal eigenfunctions} \cite{mezic2020spectrum}. \\

{\color{black}
\noindent 4. When the matrix $\bE$ has multiple eigenvalues at $\lambda$ with algebraic multiplicity not equal to geometric multiplicity, one can define {\it generalized principal eigenfunctions}. For example let $\lambda$ be the eigenvalue with algebraic multiplicity $m$ and geometric multiplicity one then generalized principal eigenfunctions, $\psi_\lambda^k(\bz)$ for $k=1,\ldots,m$ with eigenvalue $\lambda$ will satisfy
\begin{align}
[\mU_\tau \psi_\lambda^1](\bz)=e^{\lambda \tau} \psi^1_\lambda(\bz),\;\;[\mU_\tau \psi_\lambda^k](\bz)=e^{\lambda \tau} \psi^k_\lambda(\bz)+te^{\lambda \tau}\psi^{k-1}_\lambda\label{generalized_eigenfunction}
\end{align}
expressed in the differential form as 
\begin{align}\frac{\partial \psi_\lambda^1}{\partial \bz}\bF=\lambda \psi_\lambda^1,\;\;\frac{\partial \psi_\lambda^k}{\partial \bz}\bF=\lambda \psi_\lambda^k+\psi_\lambda^{k-1}\label{generalized_eigenfunction_differential}
\end{align}
for $k=2,\ldots,m$.
}\\

\noindent 5. The principal eigenfunctions can be used as a change of coordinates in the linear representation of a nonlinear system and draw a connection to the famous Hartman-Grobman theorem  on linearization and Poincare normal form \cite{arnold2012geometrical}. 
The principal eigenfunctions will be defined over a proper subset $\cP_\bz$ of the state space $\cZ$ (called subdomain eigenfunctions) or over the entire $\cZ$ \cite[Lemma 5.1, Corollary 5.1, 5.2, and 5.8]{mezic2020spectrum}. \\

The spectrum of the Koopman operator reveals essential information about the state space geometry of the dynamical system \cite{mezic2020spectrum, mezic2021koopman}. In particular, we have the following results.

\begin{corollary}\cite[Corollary 5.10] {mezic2020spectrum}\label{proposition_mainfolds}
Let the origin be the hyperbolic equilibrium point of the system (\ref{odesys}) with $\lambda_1,\ldots, \lambda_p$ the eigenvalues of the linearization of the system (\ref{odesys}) at the origin. Let  $\psi_u=\{\psi_{\lambda_1},\ldots, \psi_{\lambda_u}\}$ be open eigenfunctions associated with eigenvalues $\lambda_1,\ldots,\lambda_u$  with positive real part  and $\psi_s=\{\psi_{\lambda_{u+1}},\ldots, \psi_{\lambda_p}\}$ be open eigenfunctions associated with eigenvalues $\lambda_{u+1},\ldots,\lambda_p$ with negative real part defined over the domain $\cP_\bz$. Then, the joint level set of the (generalized) eigenfunctions  
\begin{align}
{\cal M}_{\cP}^s=\{\bz\in \cZ: \psi_{\lambda_1}(\bz)=\ldots=\psi_{\lambda_u}(\bz)=0\},    
\end{align}
forms the stable manifold on $\cP_\bz$ and the joint level set of the (generalized) eigenfunctions
\begin{align}
{\cal M}_\cP^u=\{\bz\in \cZ: \psi_{\lambda_{u+1}}(\bz)=\ldots=\psi_{\lambda_p}(\bz)=0\},
\end{align}
is the unstable manifold on $\cP_\bz$ of origin equilibrium point. 
\end{corollary}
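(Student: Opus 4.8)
The plan is to show the two set equalities by connecting the vanishing of the principal eigenfunctions to the spectral subspace decomposition at the origin. The key structural fact I would invoke is the defining differential relation (\ref{eig_koopmang}), namely $\frac{\partial \psi_\lambda}{\partial \bz}\bF = \lambda \psi_\lambda$, together with the observation that each principal eigenfunction $\psi_{\lambda_i}$ has a gradient at the origin aligned with the corresponding left eigenvector of the linearization $\bE$. Concretely, writing $\bF(\bz) = \bE\bz + O(\|\bz\|^2)$ and expanding $\psi_{\lambda_i}(\bz) = \bw_i^\top \bz + O(\|\bz\|^2)$, substitution into (\ref{eig_koopmang}) and matching linear terms forces $\bw_i^\top \bE = \lambda_i \bw_i^\top$, so $\bw_i$ is a left eigenvector of $\bE$ for eigenvalue $\lambda_i$. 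This identifies the leading-order behaviour of each eigenfunction with a coordinate dual to the eigenspace decomposition.

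First I would establish the inclusion ${\cal M}^s_{\rm lin} \subseteq {\cal M}^s_\cP$ at the linear level: the stable eigenspace of $\bE$ is exactly the common kernel of the left eigenvectors $\bw_1,\ldots,\bw_u$ associated with the unstable eigenvalues $\lambda_1,\ldots,\lambda_u$, which is where the linear parts of $\psi_{\lambda_1},\ldots,\psi_{\lambda_u}$ vanish simultaneously. Next I would promote this to the nonlinear manifold using invariance. By the eigenfunction property (\ref{eig_koopman}), if $\psi_{\lambda_i}(\bz)=0$ for an unstable eigenvalue then $\psi_{\lambda_i}(\bs_t(\bz)) = e^{\lambda_i t}\psi_{\lambda_i}(\bz) = 0$ for all admissible $t$, so the joint zero set ${\cal M}^s_\cP$ is \emph{flow-invariant}; this is the crucial leverage the Koopman framework provides. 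I would then argue that a point in ${\cal M}^s_\cP$ cannot escape to infinity along the unstable directions: because the growth in each unstable coordinate is governed to leading order by $\psi_{\lambda_i}$ (via the Hartman--Grobman-type conjugacy referenced in item~5, using the $\psi_{\lambda_i}$ as a partial change of coordinates), setting these coordinates to zero forces trajectories to remain in the stable manifold, i.e.\ to converge to the origin as $t\to+\infty$. Conversely, any point on the true stable manifold has all unstable eigenfunction coordinates identically zero, since a nonzero value would grow like $e^{{\rm Re}(\lambda_i)t}$ and contradict convergence to the origin. This gives the reverse inclusion and hence equality; the unstable-manifold case follows by the symmetric argument applied to $\psi_s$ under time reversal $t\to -t$.

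The main obstacle will be handling the domain-of-definition subtleties intrinsic to \emph{open} (subdomain) eigenfunctions, as opposed to globally defined ones. Because $\bC$ is not invariant and each $\psi_\lambda$ is only defined on $\cP_\bz$ with a restricted time interval $I_\bz$, the invariance-and-growth argument above must be carried out relative to $\cP_\bz$ rather than all of $\cZ$, and one must verify that the extension construction of \cite[Definition 5.2, Lemma 5.1]{mezic2020spectrum} preserves both the eigenfunction relation and the zero set on the enlarged reachable set. A secondary technical point is the case of repeated eigenvalues with nontrivial Jordan structure, where the plain relation (\ref{eig_koopmang}) is replaced by the generalized chain (\ref{generalized_eigenfunction_differential}); there I would need to check that the joint zero set of the generalized principal eigenfunctions still cuts out the correct invariant subspace, which follows because $\psi^{k}_\lambda=0$ for all $k$ in a Jordan block pins down the entire generalized eigenspace. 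Since the statement is quoted directly as \cite[Corollary 5.10]{mezic2020spectrum}, I would ultimately lean on that reference for the full measure-theoretic and extension details, presenting the linear-algebra reduction and the invariance argument as the conceptual core.
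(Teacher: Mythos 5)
The paper does not actually prove this statement: it is imported verbatim as \cite[Corollary 5.10]{mezic2020spectrum}, and the only addition the paper makes is the standing rank assumption stated immediately after the corollary, namely ${\rm rank}\left(\frac{\partial \psi_u(\bz)}{\partial \bz}\right)=u$ on $\psi_u^{-1}(0)$ and the analogous condition for $\psi_s$. So there is no internal proof to compare your argument against; your sketch has to be judged as a reconstruction of the cited result, and as such it is essentially sound. The ingredients you identify are the right ones: matching linear terms in $\frac{\partial \psi_\lambda}{\partial \bz}\bF=\lambda\psi_\lambda$ to tie each principal eigenfunction to a left (generalized) eigenvector of $\bE$; flow-invariance of the joint zero set from $\psi_{\lambda_i}(\bs_t(\bz))=e^{\lambda_i t}\psi_{\lambda_i}(\bz)$; the reverse inclusion by continuity of $\psi_{\lambda_i}$ together with $\psi_{\lambda_i}(0)=0$ (a nonzero value on a trajectory converging to the origin would have to grow like $e^{{\rm Re}(\lambda_i)t}$ yet tend to $\psi_{\lambda_i}(0)=0$); and the Jordan-block and open-eigenfunction caveats, which you correctly defer to \cite{mezic2020spectrum}. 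The one step you should make explicit rather than leave as ``growth is governed to leading order by $\psi_{\lambda_i}$'' is the forward inclusion: invariance of the zero set plus leading-order behaviour does \emph{not} by itself force points of ${\cal M}^s_\cP$ onto the stable manifold. What closes that gap is precisely the hypothesis the paper appends after the corollary: the full vector of principal (generalized) eigenfunctions must have full-rank Jacobian (equivalently, act as a diffeomorphism conjugating the flow to its linearization on $\cP_\bz$, per \cite[Theorem 5.6, Proposition 5.8]{mezic2020spectrum}), so that vanishing of the unstable components genuinely pins the trajectory to the stable subspace in the conjugated coordinates and hence forces convergence to the origin. Without that rank condition the joint zero set can contain spurious points whose ``unstable coordinates'' coordinatize nothing, and the inclusion ${\cal M}^s_\cP\subseteq$ (stable manifold) can fail; with it named as a hypothesis, your argument is a faithful proof outline.
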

For the stable and unstable manifolds to be well-defined it is assumed that the Jacobian matrix of unstable and stable eigenfunctions satisfy the rank conditions i.e., ${\rm rank}\left(\frac{\partial \psi_u(\bz)}{\partial \bz}\right)=u$ and ${\rm rank}\left(\frac{\partial \psi_s(\bz)}{\partial \bz}\right)=p-u$ for all $\bz\in \psi_u^{-1}(0)$ and $\bz\in \psi_s^{-1}(0)$ respectively.   
 
\section{Main Results}\label{section Main results}
The results developed in this paper can be applied to approximate the solution of the HJ equation that arise in various application such as dissipativity theory or $\cL_2$ gain analysis \cite{van_der_shaft_book}. However for concreteness, we focus specifically on the HJ equation that arises in the optimal control problem. Consider the infinite-horizon optimal control problem for the control-affine dynamical system.
\begin{align}
&\min_{\bu}\int_0^\infty q(\bx(t))+\frac{1}{2}\bu^\top(t)  \bD \bu(t) dt\label{cost}\\
&{\rm s.t.}\;\;\dot \bx=\bff(\bx)+\bg(\bx)\bu=:\bS(\bx,\bu),\label{cont_sys}
\end{align}
where $\bx\in \mR^n$ and $\bu\in \mR^p$ are the states and control input respectively. $\bx(t)$ is the trajectory of the control system with initial condition $\bx$. We make the following assumption on the system dynamics. 

\begin{assumption}\label{assume_system} 
\begin{enumerate}
\item 
{\color{black}
We assume that $\bff:\mR^n\to \mR^n$ and $\bg=(\bg_1,\ldots, \bg_p)$ with $\bg_i:\mR^n\to \mR^n$ for $i=1,\ldots p$ are $\cC^\infty$ functions of $\bx$ and  $\bff(0)=0$. $\bD\in \mR^{p\times p}$ is symmetric and positive definite matrix. Furthermore, $\bA=\frac{\partial \bff}{\partial \bx}(0)$, $q(0)=0,\frac{\partial q}{\partial \bx}(0)=0$, and $\frac{\partial^2 q}{\partial \bx^2}(0)=\bQ_0\in \mR^{n\times n}$ is a symmetric matrix.
}

\item We assume that the optimal control exists and that the optimal cost function is finite \cite{seierstad1977sufficient}. 
\end{enumerate}
\end{assumption}
 


The connection between the HJ solution and the Lagrangian submanifold can be understood based on the two approaches available for solving the optimal control problem. The first approach is based on solving the optimal control problem in the state space leading to the HJ equation, and the second approach is in the time domain based on the Pontryagin maximum principle. Let $V^\star(\bx)$ be the optimal value function assumed to be atleast $\cC^2$ function of $\bx$ i.e., 
\begin{align}
V^\star(\bx)=\min_{\bu} \int_0^\infty q(\bx(t))+\frac{1}{2}\bu^\top(t) \bD\bu(t) dt,
\end{align}
where $\bx(t)$ is the trajectory of the control system starting from initial condition $\bx$. The optimal cost function is independent of time since the cost is evaluated over an infinite time horizon. It is well known that the optimal cost function $V^\star(\bx)$ satisfy following HJ  equation \cite{van_der_shaft_book} 
\begin{align}
\frac{\partial V}{\partial \bx}{\bf f}-\frac{1}{2}\frac{\partial V}{\partial \bx}\bg \bD^{-1}\bg^\top \frac{\partial V}{\partial \bx}^\top   +q=0\label{HJ_equation},
\end{align}
and the optimal control input is expressed using the $V^\star(\bx)$ as 
\begin{align}
u^\star=- {\color{black}\bD^{-1}}\bg^\top\frac{\partial V}{\partial \bx}^\top.\label{optimal_input}
\end{align}
The HJ equation is a nonlinear partial differential equation, and it has to be solved for the unknown function $V: \mR^n\to \mR$.

An alternate approach for solving the optimal control problem is via the Pontryagin maximum principle. The optimal control problem is an infinite-dimensional optimization problem with the cost function given by (\ref{cost}) and system dynamics (\ref{cont_sys}) as constraints. For solving this optimization problem, an auxiliary function or Hamiltonian is introduced using a Lagrangian multiplier or co-state variable $\bp\in \mR^n$ as follows.
\begin{align} \bar H(\bx,\bp,\bu)=\left(\bff(\bx)+\bg(\bx)\bu\right)^\top\bp+ q(\bx)+\frac{1}{2}\bu^\top \bD \bu.\label{auxilaryfun}
\end{align}
The optimal $\bu^\star$ is obtained by taking the extremum of the Hamiltonian w.r.t. $\bu$ as  $\bu^\star(\bx)=-\bD^{-1}\bg(\bx)^\top\bp$. 
Substituting the optimal value  $\bu^\star$  in (\ref{auxilaryfun}), we obtain
\begin{align}
H(\bx,\bp)=\bff(\bx)^\top \bp -\frac{1}{2}\bp^\top \bg(\bx)\bD^{-1}\bg(\bx)^\top\bp+q(\bx).\label{hamiltonian}
\end{align}
Associated with the Hamiltonian (\ref{hamiltonian}) is a Hamiltonian dynamical system written as 
\begin{align}
 \dot \bx&=\frac{\partial H(\bx,\bp)}{\partial \bp}=\bff-\bg \bD^{-1}\bg^\top \bp\label{Ham_system}\\
    \dot \bp&=-\frac{\partial H(\bx,\bp)}{\partial \bx}=-\left(\frac{\partial \bff}{\partial \bx}\right)^\top \bp+\frac{1}{2}\frac{\partial (\bp^\top \bg \bD^{-1} \bg^\top \bp)}{\partial \bx}-\frac{\partial q}{\partial \bx}^\top.\nonumber
\end{align}

The solutions of the HJ equation (\ref{HJ_equation}) and the Hamiltonian system (\ref{Ham_system}) are closely connected. Note the similarity between the HJ equation (\ref{HJ_equation}) and the Hamiltonian (\ref{hamiltonian}), where (\ref{hamiltonian}) can be obtained from (\ref{HJ_equation}) by replacing $\frac{\partial V}{\partial \bx}^\top $ by $\bp$. The Lagrangian submanifold is the connecting link between the two approaches 
for solving the optimal control problem. \\

Following the discussion from the preliminary Section \ref{section_lagsubmanifold} on Lagrangian submanifold, we can replace the Hamiltonian and the Hamiltonian vector field (\ref{hamiltonsystem}) with the specific Hamiltonian and the Hamiltonian vector field that arise in the context of the optimal control problem i.e., (\ref{hamiltonian}) and (\ref{Ham_system})

Adding a constant to the Hamiltonian (\ref{hamiltonian}) does not change the vector field (\ref{Ham_system}) and hence Eq. (\ref{hh}) from Proposition \ref{proposition_lagsubmanifold} reduces to (\ref{HJ_equation}), thereby providing a link between the Lagrangian submanifold, HJ solution, and Hamiltonian dynamics. {\color{black}The Lagrangian submanifold can be constructed as an invariant manifold of the Hamiltonian system, but not every invariant manifold is Lagrangian.
The Lagrangian manifold of interest in the optimal control problem and used in constructing the solution of the HJ equation is uniquely obtained from the stable invariant manifold of the Hamiltonian dynamical system \ref{hamiltonsystem} \cite[Proposition 11.1.4]{van_der_shaft_book}. 
}
\begin{remark} 
\begin{enumerate}
\item There are different notions of solution to the HJ equation: classical, weak, and generalized or viscosity solution. All these solutions can be connected to the Lagrangian submanifold. Refer to \cite{day1998lagrange} on the connection between Lagrangian submanifold and different notions of solutions. The classical and weak solutions differ by their differentiability properties; the viscosity solution is non-smooth.
\item In this paper, we restrict to approximating the classical solution of the HJ equation. Hence, there is an implicit assumption that the classical solution exists to the HJ equation. 
\end{enumerate}
\end{remark}

\subsubsection{Riccatti Equation}
The Riccatti equation can be viewed as the linearization of the nonlinear HJ equation; 

\begin{align}
    \bA^\top \bP_r+\bP_r \bA-\bP_r \bR_0\bP_r+\bQ_0=0,\label{riccati_equation}
\end{align}
with $\bA=\frac{\partial \bff}{\partial \bx}(0),\;\bR_0:=\bg(0)\bD^{-1}\bg(0)^\top$ and $\bQ_0=\frac{\partial^2 q}{\partial \bx^2}(0)$. A symmetric matrix $\bP_r$ is called the stabilizing solution of  (\ref{riccati_equation}) if it is the solution of (\ref{riccati_equation}) and $\bA-\bR_0\bP_r$ is stable. 
Just like we associate Hamiltonian  system (\ref{Ham_system}) with the HJ equation (\ref{HJ_equation}), we can associate linear Hamiltonian system with Hamiltonian matrix ${\cal H}_0$  as follows,
\begin{align}
    {\cal H}_0:=\begin{pmatrix}\bA&-\bR_0\\-\bQ_0&-\bA^\top\end{pmatrix}\in \mR^{2n\times 2n}\label{ham_matrix1}.
\end{align}
The Hamiltonian matrix corresponds to linearizing the nonlinear Hamiltonian system at the origin.
The necessary and sufficient conditions for the existence of stabilizing solution to the Riccatti equation (\ref{riccati_equation}) is that 
the Hamiltonian matrix has no eigenvalues on the imaginary axis, and the generalized stable eigenspace $\mE_s$ corresponding to stable eigenvalues of $\cal H$ satisfies the following condition \cite{zhou1996robust}
\begin{align}
\mE_s\oplus {\rm Im}\begin{pmatrix}0^\top&I^\top\end{pmatrix}^\top=\mR^{2n}\label{assume_complimentary} .
\end{align}
We make following assumption on the Hamiltonian matrix $\cal H$ in (\ref{ham_matrix1}).
\begin{assumption}\label{assumption_hamiltonianeigenvalues}
 We assume that the Hamiltonian matrix $\cal H$ in (\ref{ham_matrix1}) satisfy (\ref{assume_complimentary}) with none of its eigenvalues on the imaginary axis. 
\end{assumption}

 Following the discussion in Section \ref{section_lagsubmanifold}, we know that the scalar-valued function, $V$, which is used for expressing the Lagrangian submanifolds of the Hamiltonian system (\ref{Ham_system}), i.e., $\bp=\frac{\partial V}{\partial \bx}^\top$ constitute the solution of the HJ equation. For the optimal control problem, if we are only interested in determining the optimal control, then it suffices to determine the Lagrangian submanifold as the optimal input only requires knowledge of $\frac{\partial V}{\partial \bx}$ (Eq. (\ref{optimal_input})). However, in the HJ equation that arises in other applications, such as dissipativity theory, we are interested in determining the function $V(\bx)$, which acts as a storage function.
We write the HJ equation (\ref{HJ_equation}) and Hamiltonian function (\ref{hamiltonian}) in the following form
\begin{align}
\frac{\partial V}{\partial \bx}{\bf f}-\frac{1}{2}\frac{\partial V}{\partial \bx}\bR(\bx)\frac{\partial V}{\partial \bx}^\top +q=0\label{HJ_equation1},
\end{align}
\begin{align}
H(\bx,\bp)=\bff(\bx)^\top\bp -\frac{1}{2}\bp^\top\bR(\bx)\bp+q(\bx).\label{hamiltonian1}
\end{align}
Note that for the optimal control problem 
\begin{align}
\bR(\bx)=\bg(\bx) \bD^{-1}\bg(\bx)^\top.\label{Reqn}
\end{align}
The definition of $\bR(\bx)$ and $q(\bx)$ will differ based on the problem and the HJ equation under consideration, such as $\cL_2$-gain analysis or robust control problem. It is important to emphasize that the HJ equation and the Hamiltonian are nonlinear functions of $\frac{\partial V}{\partial \bx}$ and $\bp$, respectively. Hence, using  equations (\ref{HJ_equation1}) and (\ref{hamiltonian1}) directly to solve for the Lagrangian submanifold is impossible.

 The main results of this paper provides two different procedures for approximating the solution of the HJ equation based on the spectral properties of the Koopman operator associated with dynamical system $\dot \bx=\bff(\bx)$ with $n$-dimensional state space and the Hamiltonian system (\ref{hamiltonsystem}) with $2n$-dimensional state space. 
{\color{black} Our first procedure for approximating the Lagrangian submanifold and, hence, the  HJ solution follows the approximation procedure outlined in \cite{sakamoto2008analytical} with the fundamental difference that the spectral theory of the Koopman operator forms the foundation of our approximation procedure.}



Our next two subsections discuss two  procedures developed for the approximation of the Lagrangian manifolds and the HJ solution. 
\subsection{Integrable Hamiltonian System and Koopman Eigenfunctions: Procedure One}\label{section_p1}

Our first approach for approximating the Lagrangian submanifold and the HJ solution uses the Koopman operator's principal eigenfunctions corresponding to the dynamical system $\dot \bx=\bff(\bx)$. For this subsection on procedure one, we make following assumption on the vector field.{\color{black}
\begin{assumption} We assume that the equilibrium at the origin for the uncontrolled system $\dot \bx=\bff(\bx)$ is hyperbolic i.e., none of the eigenvalues of  $\bA=\frac{\partial \bff}{\partial \bx}(0)$ are on the imaginary axis. 
\end{assumption}
}
{\color{black}Since the first procedure relies on the principal (generalized) eigenfunctions of the Koopman operator associated with uncontrolled system $\dot \bx=\bff(\bx)$, the approximate HJ solution is valid in the domain of the state space where the principal (generalized) eigenfunctions are well defined, say in domain $\cP_\bx$. In particular, the principal (generalized) eigenfunctions serve as a change of coordinates to transform the nonlinear system $\dot \bx=\bff(\bx)$ to a linear system in domain $\cP_\bx$  \cite[Lemma 5.1, Corollary 5.1, 5.2, and 5.8]{mezic2020spectrum}. This rules out the possibility of other equilibrium points or attractor sets in $\cP_\bx$.}



{\color{black} The procedure relies on decomposing the Hamiltonian in Eq. (\ref{hamiltonian1}) into integrable (i.e., the part that can be resolved exactly) and non-integrable parts. This approximation procedure has similarity with procedure one developed in \cite{sakamoto2008analytical}. The fundamental difference is that we use Koopman eigenfunctions instead of {\it integral of motions} to decompose the Hamiltonian into integrable and non-integrable parts.} We write the Hamiltonian function, $H(\bx,\bp)$ from (\ref{hamiltonian1}), as the sum of the nominal Hamiltonian, $H_0$, and its perturbation, $H_1$.
\begin{align}
H(\bx,\bp)=H_0(\bx,\bp)+H_1(\bx,\bp),\label{Hamnompert}
\end{align}
where 
\[H_0(\bx,\bp)=\bp^\top {\bf f}(\bx),\;\;\;H_1(\bx,\bp)=-{\frac{1}{2}}\bp^\top \bR(\bx)\bp+q(\bx).\]

Consider the Hamiltonian dynamical system  constructed using the nominal Hamiltonian $H_0(\bx,\bp)=\bp^\top {\bf f}(\bx)$ 
\begin{eqnarray}\label{ham_lifting} {\cal X}_{H_0}\equiv\left\{ \begin{array}{ccl}
  \dot \bx&=&\frac{\partial H_0(\bx,\bp)}{\partial \bp}={\bf f}(\bx)\nonumber\\
    \dot \bp&=&-\frac{\partial H_0(\bx,\bp)}{\partial \bx}=-\left(\frac{\partial {\bf f}}{\partial \bx}\right)^\top \bp.\end{array}\right.
\end{eqnarray}

\noindent {\bf Notation}: For the rest of this subsection, we use the following notations. We let $\lambda_1,\ldots,\lambda_j,\ldots, \lambda_q$ be the $q(\leq n)$ distinct eigenvalues of $\bA$ with algebraic multiplicity of $\lambda_j$ equal to $m_j$ and geometric multiplicity  one \footnote{The geometric multiplicity, say equal to two, can be treated as two geometric eigenspaces of dimension one. } for $j=1,\ldots,q$ and {\color{black}$\phi_{\lambda_j}^k(\bx)\in \cC^2$} for $k=1,\ldots, m_j$ be the principal (generalized) eigenfunctions associated with eigenvalue $\lambda_j$ assumed to be well-defined in domain $\cP_\bx$. For compact notation, we use
\begin{align}
\Phi(\bx)=(\phi_{\lambda_1}^1(\bx),\ldots, \phi_{\lambda_j}^{k_j}(\bx),\ldots, \phi_{\lambda_q}^{m_q}(\bx))^\top\label{principaleig}
\end{align}
be the vector of principal (generalized) eigenfunctions in real form and $\Lambda$ be the matrix of eigenvalues in real Jordan canonical form. 

Also for notations convenience at places, we will simply denote the principal (generalized) eigenfunctions and eigenvalues pair by $\{\phi_{\lambda_\ell},\lambda_\ell\}$ for $\ell=1,\ldots, n$ without the superscript for  the multiplicity. Since $\Phi(\bx)$ be the vector of principal (generalized) eigenfunctions with eigenmatrix $\Lambda$, we have 
\begin{align}
\frac{\partial \Phi}{\partial \bx}\bff=\Lambda \Phi,\;\;\;[\mU_t \Phi](\bx)=e^{\Lambda t}\Phi(\bx).\label{eigenfunctioncomapct}
\end{align}

We next show that the Koopman eigenfunctions corresponding to the dynamical system (\ref{odesys}) can be used to provide an integrable structure for the Hamiltonian system (\ref{ham_lifting}). 

\begin{proposition} \label{proposition_integrable} Let 
$\{\phi_{\lambda_\ell}(\bx),\lambda_\ell\}$ for $\ell=1,\ldots,n$ be the principal (generalized) eigenfunctions eigenvalues pair of the Koopman generator corresponding to the system, $\dot \bx=\bff(\bx)$. Then, $\{\phi_{\lambda_\ell}(\bx),\lambda_\ell\}$ and $\{H_0(\bx,\bp),0\}$ are the principal (generalized) eigenfunctions eigenvalues pair of the Koopman generator associated with the Hamiltonian system (\ref{ham_lifting}). 
\end{proposition}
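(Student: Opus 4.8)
The plan is to verify directly that the stated functions satisfy the eigenfunction equation (\ref{eig_koopmang}) for the infinitesimal generator of the Koopman operator associated with the lifted $2n$-dimensional system ${\cal X}_{H_0}$ of (\ref{ham_lifting}). Writing the generator $\cK$ of ${\cal X}_{H_0}$ acting on a function $\psi(\bx,\bp)$, the field $\bF=(\bff,-(\partial\bff/\partial\bx)^\top\bp)$ gives
\begin{align}
\cK\psi=\frac{\partial \psi}{\partial \bx}\bff-\frac{\partial \psi}{\partial \bp}\left(\frac{\partial \bff}{\partial \bx}\right)^\top\bp. \nonumber
\end{align}
The entire proof reduces to evaluating $\cK$ on $\phi_{\lambda_\ell}(\bx)$ and on $H_0(\bx,\bp)$.

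First I would treat the inherited eigenfunctions $\phi_{\lambda_\ell}(\bx)$. The crucial observation is that each $\phi_{\lambda_\ell}$ depends on $\bx$ only, so $\frac{\partial \phi_{\lambda_\ell}}{\partial \bp}=0$ and the second term of $\cK$ vanishes identically. Hence $\cK\phi_{\lambda_\ell}=\frac{\partial \phi_{\lambda_\ell}}{\partial \bx}\bff$, which is exactly the action of the base generator of $\dot\bx=\bff(\bx)$. Invoking the defining relation (\ref{eigenfunctioncomapct}) in the ordinary case, and (\ref{generalized_eigenfunction_differential}) in the generalized case, the eigenvalue relation $\cK\phi_{\lambda_\ell}=\lambda_\ell\phi_{\lambda_\ell}$ (respectively $\cK\phi_{\lambda}^k=\lambda\phi_\lambda^k+\phi_\lambda^{k-1}$) transfers verbatim to ${\cal X}_{H_0}$. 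Thus every principal (generalized) eigenfunction of $\dot\bx=\bff(\bx)$ remains a principal (generalized) eigenfunction of the lifted system with the same eigenvalue.

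Next I would show that $\{H_0,0\}$ is an eigenfunction--eigenvalue pair, i.e.\ $\cK H_0=0$. Using $H_0=\bp^\top\bff(\bx)$ one computes $\frac{\partial H_0}{\partial\bx}=\bp^\top\frac{\partial\bff}{\partial\bx}$ and $\frac{\partial H_0}{\partial\bp}=\bff^\top$, so that
\begin{align}
\cK H_0=\bp^\top\frac{\partial\bff}{\partial\bx}\bff-\bff^\top\left(\frac{\partial\bff}{\partial\bx}\right)^\top\bp=0, \nonumber
\end{align}
the two scalar terms cancelling because each is the transpose of the other. Conceptually this is just the statement that the Hamiltonian $H_0$ is conserved along the flow of its own Hamiltonian vector field (the Poisson bracket $\{H_0,H_0\}$ vanishes), so $H_0$ is a first integral and therefore an eigenfunction with eigenvalue zero.

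The computations are routine, so the substantive points lie elsewhere, and the part needing the most care is the qualifier \emph{principal}. I would argue that, since the $\phi_{\lambda_\ell}$ are well defined on the lifted domain $\cP_\bx\times\mR^n$ and are associated with the eigenvalues $\lambda_\ell$ of the linearization $\bA$, while $H_0$ vanishes to second order at the origin and supplies the eigenvalue $0$, the collection $\{\phi_{\lambda_\ell}\}_{\ell=1}^n\cup\{H_0\}$ genuinely consists of principal (generalized) eigenfunctions of ${\cal X}_{H_0}$ over the appropriate domain rather than arbitrary eigenfunctions. A secondary remark worth making is that this eigenvalue-$0$ first integral together with the $n$ inherited eigenfunctions is exactly the data that furnishes the integrable structure exploited in the remainder of this subsection.
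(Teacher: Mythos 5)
Your proof is correct and follows essentially the same route as the paper: you write the generator of the lifted Hamiltonian system, observe that the $\bp$-derivative term vanishes for the $\bx$-dependent eigenfunctions $\phi_{\lambda_\ell}$ so their eigenvalue relations transfer verbatim, and verify $\cK H_0=0$ by the same transpose-cancellation of the two scalar terms. The added remarks on the Poisson-bracket interpretation and the \emph{principal} qualifier are sound embellishments but do not change the argument.
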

\begin{proof}
The action of the Koopman generator on $\psi(\bx,\bp)$ is given by
\begin{align}
    {\cal K}_{{\cal X}_{H_0}}\psi(\bx,\bp)=\frac{\partial \psi}{\partial \bx}{\bf f}-\frac{\partial \psi}{\partial \bp}\left(\frac{\partial {\bf f}}{\partial \bx}\right)^\top{\bf p}.\label{kgeneratorHamiltonian}
\end{align}
Since $\{\phi_{\lambda_\ell},\lambda_\ell\}$ for $\ell=1,\ldots, n$ form the eigenfunctions, eigenvalues pair of the Koopman generator corresponding to system (\ref{ham_lifting}) and $\phi_{\lambda_k}(\bx)$ are not function of $\bp$, it follows that
\[{\cal K}_{{\cal X}_{H_0}}\phi_{\lambda_j}^k=\frac{\partial \phi_{\lambda_j}^k}{\partial \bx}\bff=\lambda_j \phi_{\lambda_j}^k+\phi_{\lambda_j}^{k-1}.\]
for $k=2,\ldots,m_j$. 
Similarly, by taking $\psi(\bx,\bp)=H_0(\bx,\bp)$ and using (\ref{kgeneratorHamiltonian}), we obtain
\begin{align}
   {\cal K}_{{\cal X}_{H_0}}H_0= \left(\frac{\partial H_0}{\partial \bx}\right) {\bf f}-\left(\frac{\partial H_0}{\partial \bp}\right) \left(\frac{\partial {\bf f}}{\partial \bx}\right)^\top \bp\nonumber\\
  =\bp^\top   \left(\frac{\partial {\bf f}}{\partial \bx}\right) {\bf f}-{\bf f}^\top \left(\frac{\partial {\bf f}}{\partial \bx}\right)^\top \bp=0,
\end{align}
i.e., eigenfunction with eigenvalue zero.\qed
\end{proof}
We aim to show that the eigenfunctions, eigenvalues pair from Proposition \ref{proposition_integrable} provide integrable structure to the nominal Hamiltonian $H_0$. Toward this goal, we want to determine the canonical change of coordinates for the unperturbed Hamiltonian system. This can be achieved using generating function, $W(\bx,t)$, that satisfies the following PDE \cite{arnold2012geometrical,abraham2008foundations}.
\begin{align}
    H_0\left(\bx,\frac{\partial W(\bx,t)}{\partial \bx}\right)+\frac{\partial W(\bx,t)}{\partial t}=0\label{HJ_nominal}.
\end{align}

\begin{proposition}\label{proposition_generating} Let  the  initial condition of the PDE \ref{HJ_nominal} {\color{black}$W_0(\bx)\in \cC^2$}  lie in the span of the principal (generalized) eigenfunctions, i.e., 
\begin{align}
    W_0(\bx)=\sum_{j=1}^q\sum_{k_j=1}^{m_j}  P_j^{k_j} \phi^{k_j}_{\lambda_j}(\bx)=\bP^\top \Phi(\bx),\label{W0}
\end{align}
for some $n$ constants $(P_1^1,\ldots, P_j^{k_j},\ldots, P_{q}^{m_q})^\top=: \bP\in \mR^n$ with $\Phi(\bx)$ as defined in (\ref{principaleig}). Then the solution $W(\bx,t)$ to PDE (\ref{HJ_nominal}) is given by
\begin{align}
W(\bx,t)&=\sum_{j=1}^p P_j^1 e^{-\lambda_j t} \phi_{\lambda_j}^1(\bx)\nonumber\\&+ \sum_{j=1}^q \sum_{k_j=2}^{m_j} P_j^{k_j} \left(e^{-\lambda_j t} \phi_{\lambda_j}^{k_j}(\bx)+te^{-\lambda_j t} \phi_{\lambda_j}^{k_j-1} (\bx)\right)\\&=\bP^\top e^{-\Lambda t}\Phi(\bx)
\end{align}
Denote $W(\bx,t)=\bar W(\bx,\bP,t)$, 
Then 
\begin{align}
    p_i=\frac{\partial \bar W(\bx,\bP,t)}{\partial x_i},\;\;X_j^{k_j}=\frac{\partial \bar W(\bx,\bP,t)}{\partial P_j^{k_j}},\label{canonicalcoordinates}
\end{align}
for $i=1,\ldots,n, j=1,\ldots, q,k_j=1,\ldots, m_j$ defines the canonical change of coordinates in which the nominal Hamiltonian dynamics (\ref{ham_lifting}) is completely integrable and hence of the form
\begin{align}
\dot X^{k_j}_j=0,\;\;\;\dot P^{k_j}_j=0,\label{integrable}
\end{align}
for $j=1,\ldots, q$ and $k_j=1,\ldots,m_j$. 
\end{proposition}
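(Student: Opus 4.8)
The plan is to proceed in two stages: first verify the claimed closed-form solution of the Hamilton--Jacobi PDE \eqref{HJ_nominal}, and then invoke the generating-function machinery to read off the integrable normal form.

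\emph{Stage 1 (solving the PDE).} Since $H_0(\bx,\bp)=\bp^\top\bff(\bx)$, equation \eqref{HJ_nominal} is the linear transport PDE $\frac{\partial W}{\partial\bx}\bff+\frac{\partial W}{\partial t}=0$. I would substitute the candidate $W(\bx,t)=\bP^\top e^{-\Lambda t}\Phi(\bx)$ and use the two facts already available: the eigenfunction identity $\frac{\partial\Phi}{\partial\bx}\bff=\Lambda\Phi$ from \eqref{eigenfunctioncomapct}, and the commutation $\Lambda e^{-\Lambda t}=e^{-\Lambda t}\Lambda$. Then $\frac{\partial W}{\partial\bx}\bff=\bP^\top e^{-\Lambda t}\frac{\partial\Phi}{\partial\bx}\bff=\bP^\top e^{-\Lambda t}\Lambda\Phi$ while $\frac{\partial W}{\partial t}=-\bP^\top\Lambda e^{-\Lambda t}\Phi$, so their sum cancels by commutation; the initial condition $W(\bx,0)=\bP^\top\Phi(\bx)=W_0(\bx)$ is immediate. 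The component expansion displayed in the statement is then just the block-diagonal evaluation of $e^{-\Lambda t}$ on each real Jordan block: the diagonal factor $e^{-\lambda_j t}$ acts on $\phi^{k_j}_{\lambda_j}$ and the nilpotent super-diagonal produces the $t e^{-\lambda_j t}\phi^{k_j-1}_{\lambda_j}$ terms consistent with \eqref{generalized_eigenfunction}.

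\emph{Stage 2 (canonical transformation and integrability).} Here I would recognize $\bar W(\bx,\bP,t)$ as a generating function of the second kind, depending on the old coordinates $\bx$ and the new momenta $\bP$; the relations \eqref{canonicalcoordinates}, namely $p_i=\partial\bar W/\partial x_i$ and $X^{k_j}_j=\partial\bar W/\partial P^{k_j}_j$, are precisely the transformation equations such a generating function prescribes. The one non-trivial check is that these relations define a genuine (local) change of coordinates, which requires the mixed Hessian $\partial^2\bar W/\partial\bx\,\partial\bP=e^{-\Lambda t}\frac{\partial\Phi}{\partial\bx}$ to be non-singular; since $e^{-\Lambda t}$ is always invertible, this reduces to $\det\frac{\partial\Phi}{\partial\bx}\neq 0$, which holds on $\cP_\bx$ because the principal (generalized) eigenfunctions linearize $\dot\bx=\bff(\bx)$ there and hence form a diffeomorphism. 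Once the map is established as canonical, the standard Hamilton--Jacobi theorem gives the transformed Hamiltonian $K=H_0+\partial\bar W/\partial t$, which vanishes identically by \eqref{HJ_nominal}; Hamilton's equations in the new coordinates then read $\dot X^{k_j}_j=\partial K/\partial P^{k_j}_j=0$ and $\dot P^{k_j}_j=-\partial K/\partial X^{k_j}_j=0$, which is exactly \eqref{integrable}.

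\emph{Main obstacle.} The PDE verification is routine linear algebra, so the real care goes into Stage 2: the point needing genuine justification is that $\bar W$ qualifies as a valid type-2 generating function on $\cP_\bx$, i.e. the non-degeneracy of $\partial\Phi/\partial\bx$ and the resulting invertibility of the coordinate map, rather than merely asserting the normal-form conclusion. As a consistency cross-check I would also verify $\dot\bX=0$ directly: along the flow of ${\cal X}_{H_0}$ one has $\Phi(\bs_t(\bx))=e^{\Lambda t}\Phi(\bx)$ from \eqref{eigenfunctioncomapct}, so $\bX=e^{-\Lambda t}\Phi(\bx)$ is manifestly constant, which corroborates the generating-function argument and exposes the concrete link to the Koopman eigenfunction evolution.
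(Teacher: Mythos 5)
Your proof is correct, but it reaches the conclusion by a genuinely different route than the paper in both stages. For the PDE, the paper does not verify the candidate by substitution: it observes that (\ref{HJ_nominal}) with $H_0=\bp^\top\bff$ is exactly the Koopman PDE (\ref{Koopmanpde}) for the time-reversed vector field $\dot\bx=-\bff(\bx)$, so that $W(\bx,t)=[\mU_{-t}W_0](\bx)=W_0(\bs_{-t}(\bx))$, and then uses that $\{\phi_{\lambda_\ell},-\lambda_\ell\}$ are eigenpairs of the reversed flow to evaluate $[\mU_{-t}\Phi](\bx)=e^{-\Lambda t}\Phi(\bx)$; this derives the solution (with uniqueness coming for free from the characteristics/flow representation), whereas your substitution argument exhibits a solution and should, strictly, be accompanied by the one-line remark that the linear transport Cauchy problem has a unique classical solution. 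For integrability, the paper computes directly from (\ref{xp_eqn}): $\dot\bX=e^{-\Lambda t}\frac{\partial\Phi}{\partial\bx}\bff-e^{-\Lambda t}\Lambda\Phi=0$ by (\ref{eigenfunctioncomapct}), and disposes of $\dot\bP=0$ by saying $\bP$ is the constant coefficient vector in the expansion of $W_0$. Your route through the type-2 generating-function/Jacobi theorem ($K=H_0+\partial\bar W/\partial t\equiv 0$, hence both new coordinates are constant) is arguably tighter here: it treats $\dot\bX=0$ and $\dot\bP=0$ on the same footing --- the paper's justification of $\dot\bP=0$ is informal, since once $(\bX,\bP)$ are taken as coordinates on phase space their invariance under the Hamiltonian flow is something to be proved, not assumed --- and it makes explicit the non-degeneracy requirement $\det\frac{\partial\Phi}{\partial\bx}\neq 0$ on $\cP_\bx$, which the paper only records after the proof in (\ref{phiinverse}). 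What the paper's approach buys in exchange is the conceptual point of the whole section: solving the nominal HJ equation is literally an application of the Koopman operator $\mU_{-t}$, a link your classical-mechanics argument hides, although your closing cross-check $\bX(t)=e^{-\Lambda t}\Phi(\bs_t(\bx))\equiv\Phi(\bx)$ recovers precisely the paper's direct computation in integrated form.
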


\begin{proof} Using compact notation to write  (\ref{canonicalcoordinates}) as follows,
\begin{align}
\bp=\frac{\partial \bar W(\bx,\bP,t)}{\partial \bx}^\top,\;\;\bX=\frac{\partial \bar W(\bx,\bP,t)}{\partial \bP}^\top\label{compactcanonical}
\end{align}
where $\bx=(x_1,\ldots,x_n)^\top,\bp=(p_1,\ldots,p_n)^\top$, and $\bX=(X_1^1,\ldots,X_{j}^{k_j},\ldots,X_q^{m_q})^\top$. Following the definition of nominal Hamiltonian, $H_0(\bx,\bp)$ from (\ref{Hamnompert}), the PDE (\ref{HJ_nominal}) can be written as  
\begin{align}
 \frac{\partial W}{\partial t}= - \left( \frac{\partial W}{\partial \bx}\right) {\bf f}.
\end{align}
Above is a Koopman PDE (\ref{Koopmanpde}) generated by vector field $\dot \bx=-{\bf f}(\bx)$ . Hence the solution of the PDE can be written  as
\begin{align}
    W(\bx,t)=W_0(\bs_{-t}(\bx)),
\end{align}
where $\bs_{-t}(\bx)$ is the solution of differential equation $\dot \bx=-{\bf f}(\bx)$ (i.e., time reversed flow of vector field $\dot \bx={\bf f}(\bx)$).  Above solution can also be written in terms of the Koopman operator as 
\begin{align}
    W(\bx,t)=W_0(\bs_{-t}(\bx))=[\mU_{-t} W_0](\bx),
\end{align}
where $\mU_{-t}$ is the Koopman operator associated with $\dot \bx=-{\bf f}(\bx)$. Since $\{\phi_{\lambda_\ell}(\bx),\lambda_\ell\}$ are the principal (generalized) eigenfunctions, eigenvalues pair of the Koopman generator corresponding to system $\dot \bx=\bff(\bx)$, it follows that $\{\phi_{\lambda_\ell},-\lambda_\ell\}$ are the principal (generalized) eigenfunctions and eigenvalues pair of the Koopman generator corresponding to the system $\dot \bx=-{\bf f}(\bx)$ as  $\frac{\partial \Phi}{\partial \bx}(-\bff(\bx))=-\Lambda \Phi(\bx)$ from (\ref{eigenfunctioncomapct}). 
With $W_0(\bx)$ of the form (\ref{W0}),  we have
    \begin{align}
    W(\bx,t)=[\mU_{-t} W_0](\bx)=\bP^\top [\mU_{-t}\Phi](\bx)=\bP^\top e^{-\Lambda t}\Phi(\bx).\label{solutionPDE}
    \end{align}
    We next show that the system is integrable in the new coordinates $(\bX,\bP)$ i.e., $\dot \bX=\dot \bP=0$. Using the compact notations in (\ref{compactcanonical}) and (\ref{solutionPDE}), we obtain
    \begin{align}\bp=\left(\frac{\partial \Phi}{\partial \bx}\right)^\top e^{-\Lambda^\top t}\bP,\;\;\;\bX=e^{-\Lambda t}\Phi(\bx)\label{xp_eqn}
    \end{align}
    $\dot \bP=0$ follows from the fact that $\bP$ is a vector of constant coefficient used in the expansion of initial condition $W_0(\bx)$. For $\dot \bX$, we have 
    \[\dot \bX=e^{-\Lambda t}\frac{\partial \Phi}{\partial \bx}\bff-e^{-\Lambda t}\Lambda \Phi
=0\]
    follows from (\ref{eigenfunctioncomapct}).

\qed
\end{proof}

Note that the canonical change of coordinates are invertible i.e.,
\begin{align}
\bx=\Phi^{-1}(e^{\Lambda t}\bX),\;\;\bP=e^{\Lambda^\top  t}\left(\frac{\partial \Phi}{\partial \bx}^\top\right)^{-1}\bp,\label{phiinverse}
\end{align}
 where, $\Phi^{-1}(\cdot)$ is the inverse of mapping $\Phi$, which is known to exist as the principal (generalized) eigenfunctions forms a diffeomorphism in $\cP_\bx$, mapping a nonlinear system to linear system (refer to discussion following Definition \ref{definition_openeigenfunction}) and \cite[Theorem 5.6, Proposition 5.8] {mezic2020spectrum}.
  In \cite{sakamoto2008analytical} similar integrable structure was derived using integral of motions of Hamiltonian dynamical system.
We next write the perturbed Hamiltonian $H_1(\bx,\bp)$ in terms of the new canonical variables $(\bX,\bP)$ as \begin{align}
    H_1(\bx,\bp)=H_1(\bx(\bX,t),\bp(\bX,\bP,t))=: \bar H_1(\bX,\bP,t).
\end{align}
\begin{align}
    \bar H_1(\bX,\bP,t)=-\frac{1}{2}\bP^\top e^{-\Lambda t}\left(\frac{\partial \Phi}{\partial \bx}\right)\bR(\bx)\left(\frac{\partial \Phi}{\partial \bx}\right)^\top e^{-\Lambda^\top t}\bP\nonumber\\
    +q(\Phi^{-1}(e^{\Lambda t}\bX)).\label{HamiltonianXP}
\end{align}
The Hamiltonian system in the new coordinates $(\bX,\bP)$ is given by
\begin{align}
    \dot \bX=\frac{\partial \bar H_1}{\partial \bP},\;\;\;
    \dot \bP=-\frac{\partial \bar H_1}{\partial \bX} \label{Ham_dynamics22}.
\end{align}
From the above discussion, it follows that there are no approximations involved in arriving at the Hamiltonian system (\ref{Ham_dynamics22}) from (\ref{Ham_system}).
However, the Hamiltonian system in $(\bX,\bP)$ coordinates (\ref{Ham_dynamics22}) is still nonlinear, and hence it is challenging to find the Lagrangian submanifold and, therefore, the HJ solution. We make the following approximations for the approximate computation of the Lagrangian submanifold and the HJ solution.

\begin{approximation}\label{approximation}
We simplify the Hamiltonian $\bar H_1(\bX,\bP,t)$ in (\ref{HamiltonianXP}) by making following approximation. 
\begin{align}
\frac{\partial \Phi}{\partial \bx}\bR(\bx)\frac{\partial \Phi}{\partial \bx}^\top\approx \bR_1,\;\;\&\;\;q(\bx)\approx \frac{1}{2}\Phi^\top \bQ_1\Phi,
\end{align}
where $\bR_1$ and $\bQ_1$ are some constant positive matrices. The approximation $\frac{\partial \Phi}{\partial \bx}\bR(\bx)\frac{\partial \Phi}{\partial \bx}^\top\approx \bR_1$ can be broken down as follows. First, the matrix $\bR(\bx)=\bg(\bx)\bD^{-1}\bg(\bx)^\top\approx \bR_0$ is a constant matrix and will be true if the input matrix $\bg(\bx)=\bB$ is constant, if not then $\bR_0$ can be obtained as $\bR_0=\bg(0)\bD^{-1}\bg(0)^\top$. Second, the eigenfunctions $\Phi(\bx)$ which admits a decomposition into linear and nonlinear parts, {\color{black}i.e., $\Phi(\bx)=\bV^\top \bx+\bh_1(\bx)$, where $\bV^\top\bx$ the linear part with matrix $\bV$ the generalized left eigenvectors of $\bA$ with eigenmatrix $\Lambda$ i.e., $\bV^\top \bA=\Lambda \bV^\top$ and $\bh_1(\bx)$ the purely nonlinear part and hence satisfying $\frac{\partial \bh_1}{\partial \bx}(0)=0$}
(refer to Section \ref{section_compute} for more details). Then $\Phi(\bx)$ is approximated as
\[\Phi(\bx)\approx \bV^\top \bx.\] 
Combining these, we can write
\begin{align}
&\frac{\partial \Phi}{\partial \bx}\bR(\bx)\frac{\partial \Phi}{\partial \bx}^\top=\left(\bV+\frac{\partial  \bh_1}{\partial \bx}\right)\left(\bR_0+\bR(\bx)-\bR_0\right)\left(\bV+\frac{\partial \bh_1}{\partial \bx}\right)^\top\nonumber\\
&=\bV \bR_0\bV^\top +O(|\bx|^2)\approx \bV \bR_0\bV^\top =: \bR_1.\label{assumeR1} 
\end{align}
The other approximation $q(\bx)\approx \Phi^\top(\bx) \bQ_1 \Phi(\bx)$ can be understood by projecting the state cost along the eigenfunctions $\phi_{\lambda_i}(\bx) \phi_{\lambda_\ell}(\bx)$ for $i,\ell=1,\ldots, n$. In particular, for the case when the state cost is quadratic, i.e., $q(\bx)=\frac{1}{2}\bx^\top \bQ_0 \bx$, we have 
\begin{align}
&\frac{1}{2}\Phi^\top \bQ_1\Phi= \frac{1}{2}(\bV^\top \bx +\bh_1(\bx))^\top \bQ_1 (\bV^\top \bx +\bh_1(\bx))\nonumber\\
&= \frac{1}{2}\bx^\top \bV\bQ_1 \bV^\top \bx+O(|\bx|^3)\approx \frac{1}{2}\bx^\top \bQ_0 \bx\label{assumeQ1}
\end{align}
and where $\bQ_1$ is defined as $\bQ_1:=\bV^{-1}\bQ_0 (\bV^\top)^{-1}$.
\end{approximation}

Using the above approximation,  the Hamiltonian can be written as 
\[\bar H_1(\bX,\bP,t)=-\frac{1}{2}\bP^\top e^{-\Lambda t}\bR_1 e^{-\Lambda^\top t}\bP+\frac{1}{2}\bX^\top e^{\Lambda^\top t}\bQ_1 e^{\Lambda t}\bX.\]
The nonlinear Hamiltonian system
(\ref{Ham_dynamics22}) get transformed to following linear Hamiltonian system.
\begin{align}
    \begin{pmatrix}\dot {\bX}\\\dot {\bP}\end{pmatrix}=\begin{pmatrix}0&-e^{-\Lambda t} \bR_1 e^{-\Lambda^\top t}\\-e^{\Lambda^\top t}\bQ_1 e^{\Lambda t}&0\end{pmatrix}\begin{pmatrix}\bX\\\bP\end{pmatrix}.\label{ham_linear}
\end{align}

The time-varying Hamiltonian system can be converted to time-invariant system by performing one more linear time-varying  change of coordinates as \begin{align}\bar \bX=e^{\Lambda t}\bX,\;\;\;\bar \bP=e^{-\Lambda^\top t}\bP\label{coordinatechange}.
\end{align}
Writing (\ref{ham_linear}) in $(\bar \bX, \bar \bP)$ coordinates, we obtain

\begin{align}
   \begin{pmatrix}\dot{\bar \bX}\\\dot {\bar \bP}\end{pmatrix}= \begin{pmatrix}\Lambda&-  \bR_1\\-\bQ_1&-\Lambda^\top\end{pmatrix} \begin{pmatrix}{\bar \bX}\\{\bar \bP}\end{pmatrix}=:{\cal H}_1\begin{pmatrix}{\bar \bX}\\{\bar \bP}\end{pmatrix}\label{ham_matrix}.
\end{align}
Since the system equations are linear, an analytical expression for the Lagrangian subspace and the HJ solution can be found. The problem of computing the Lagrangian subspace for the linear system can be reduced to the solution of the Riccatti equation. The condition for the existence of Lagrangian submanifold and stabilizing solution to the Riccatti equation for ${\cal H}_1$ are the same as that on ${\cal H}_0$ in Eq. (\ref{ham_matrix1}) (Assumption \ref{assumption_hamiltonianeigenvalues}). Note that ${\cal H}_0$ and ${\cal H}_1$ are related by a linear change of coordinates and hence ${\cal H}_1$ satisfies Assumption \ref{assumption_hamiltonianeigenvalues}.  

The results we present next are known in the literature in different forms. However, we offer these results in the spirit of the Koopman theory, which specializes in linear systems case.
The main result of this construction procedure is summarized in the following proposition. 
\begin{proposition}\label{Proposition_Lagsubspace}For the linear Hamiltonian system (\ref{ham_matrix}) under Assumption \ref{assumption_hamiltonianeigenvalues} the Lagrangian subspace is given by
\begin{align}\hat \cL_l=\{(\bar\bX,\bar\bP): \bar\bP= \bL \bar\bX,\;\;\bL=-\bD_2^{-1}\bD_1 \},\label{lmanifold}
\end{align}
where 
\begin{align}
\bD=\begin{pmatrix}\bd_1^\top\\\vdots\\\bd_n^\top\end{pmatrix}=\begin{pmatrix}\bD_1&\bD_2\end{pmatrix}\in\mR^{n\times 2n},\;\;\bD_i\in \mR^{n\times n},\;i=1,2\label{D_def}.
\end{align}
and $\bd_j^\top$ for $j=1,\ldots, n$ are the left generalized eigenvectors associated with eigenvalues with positive real part of the Hamiltonian matrix ${\cal H}_1$. 
\end{proposition}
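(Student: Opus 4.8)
The plan is to identify the Lagrangian subspace $\hat\cL_l$ in (\ref{lmanifold}) with the stable invariant subspace $\mE_s$ of the Hamiltonian matrix $\cH_1$ from (\ref{ham_matrix}), and then to characterize $\mE_s$ as the common kernel of the unstable left generalized eigenvectors $\bd_1^\top,\ldots,\bd_n^\top$. First I would record the algebraic structure of $\cH_1$. With $J=\begin{pmatrix}0&I\\-I&0\end{pmatrix}$ and $\bR_1,\bQ_1$ symmetric, a direct computation shows $J\cH_1$ is symmetric, i.e. $\cH_1^\top J=-J\cH_1$, so $\cH_1$ is a Hamiltonian matrix and its spectrum is symmetric about the imaginary axis. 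Under Assumption \ref{assumption_hamiltonianeigenvalues} no eigenvalue sits on the imaginary axis, so the $2n$ eigenvalues split, counting multiplicity, into exactly $n$ with negative real part spanning the $n$-dimensional stable subspace $\mE_s$ and exactly $n$ with positive real part, whose left generalized eigenvectors are the rows of $\bD$ in (\ref{D_def}).

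Next I would connect the unstable left eigenvectors to $\mE_s$. The key tool is the standard biorthogonality of left and right generalized eigenvectors belonging to distinct eigenvalues: since the stable and unstable spectra are disjoint (they have opposite sign of real part), every unstable left generalized eigenvector pairs to zero with every stable right generalized eigenvector, i.e. $\bd_j^\top v=0$ for all $v\in\mE_s$ and all $j$. (The Hamiltonian identity $\cH_1^\top J=-J\cH_1$ gives an equivalent route: if $\bd^\top\cH_1=\mu\,\bd^\top$ with $\mathrm{Re}\,\mu>0$, then $J^{-1}\bd$ is a right generalized eigenvector of $\cH_1$ for the stable eigenvalue $-\mu$.) Hence $\mE_s\subseteq\ker\bD$. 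Because the $n$ unstable left generalized eigenvectors are linearly independent, $\mathrm{rank}\,\bD=n$ and $\dim\ker\bD=2n-n=n=\dim\mE_s$, so in fact $\mE_s=\ker\bD=\{(\bar\bX,\bar\bP):\bD_1\bar\bX+\bD_2\bar\bP=0\}$.

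To put this in the graph form (\ref{lmanifold}) I must show $\bD_2$ is invertible, and this is the step I expect to be the \emph{main obstacle}. An element $(0,\bar\bP)$ lies in $\mE_s\cap\mathrm{Im}\begin{pmatrix}0^\top&I^\top\end{pmatrix}^\top$ exactly when $\bD_2\bar\bP=0$, so the complementarity condition (\ref{assume_complimentary}) applied to $\cH_1$ — which holds because $\cH_1$ and $\cH_0$ differ by a linear change of coordinates, so $\cH_1$ inherits Assumption \ref{assumption_hamiltonianeigenvalues} — forces $\ker\bD_2=\{0\}$, i.e. $\bD_2$ is invertible. Solving $\bD_1\bar\bX+\bD_2\bar\bP=0$ then yields $\bar\bP=-\bD_2^{-1}\bD_1\bar\bX=\bL\bar\bX$, which is precisely (\ref{lmanifold}). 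Finally, because $\mE_s$ is an invariant isotropic subspace of the Hamiltonian $\cH_1$, the restriction of $\omega$ to it vanishes, so the gain $\bL$ is symmetric and the graph is genuinely Lagrangian in the sense of Definition \ref{def_lagrangianmanifold}, consistent with its role in the HJ solution via Proposition \ref{proposition_lagsubmanifold}. The only delicate bookkeeping beyond the $\bD_2$-invertibility is carrying the biorthogonality argument through the Jordan (generalized eigenvector) case, which is routine since generalized eigenspaces for distinct eigenvalues remain biorthogonal.
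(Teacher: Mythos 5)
Your proof is correct and follows the same overall architecture as the paper's: identify the Lagrangian subspace with the stable subspace $\mE_s$ of $\cH_1$, characterize $\mE_s$ as $\ker\bD$ where the rows of $\bD$ are the unstable left generalized eigenvectors, and use Assumption \ref{assumption_hamiltonianeigenvalues} to invert $\bD_2$ and pass to the graph form $\bar\bP=-\bD_2^{-1}\bD_1\bar\bX$. The differences lie in how the individual steps are discharged. Where the paper obtains $\mE_s=\ker\bD$ by invoking the Koopman spectral result (Corollary \ref{proposition_mainfolds}: the stable manifold is the joint zero level set of the unstable principal eigenfunctions, which for a linear system are $\bd_j^\top\bz$), you prove the same fact from scratch via biorthogonality of left and right generalized eigenvectors for disjoint spectra plus a dimension count; this makes the step self-contained and purely linear-algebraic, at the cost of bypassing the ``Koopman specializes to linear systems'' narrative the paper is building. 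For the invertibility of $\bD_2$, your argument --- $(0,\bar\bP)\in\mE_s$ iff $\bD_2\bar\bP=0$, so the complementarity condition (\ref{assume_complimentary}) forces $\ker\bD_2=\{0\}$ --- is more direct than the paper's, which passes to orthogonal complements (rewriting (\ref{assume_complimentary}) as $\mE_s^\perp\oplus\mathrm{Im}\,(I\;\;0)^\top=\mR^{2n}$ and using that the rows of $\bD$ span $\mE_s^\perp$); both are valid. You also supply something the paper merely postulates: the paper assumes $\bL$ symmetric because it ``seeks'' a gradient parameterization, whereas you derive symmetry from isotropy of the stable invariant subspace of a Hamiltonian matrix, which is the cleaner justification and ties the result back to Definition \ref{def_lagrangianmanifold}. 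Finally, your sign agrees with the proposition ($\bL=-\bD_2^{-1}\bD_1$), while the paper's closing line states $\bL=\bD_2^{-1}\bD_1$, a slip inconsistent with its own displayed equation $\bD_1+\bD_2\bL=0$.
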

Refer to the Appendix for the proof. We next show that the matrix $\bL$ from Proposition \ref{Proposition_Lagsubspace} satisfies the Riccatti equation. 
\begin{proposition}\label{prop_riccati_lagsubspace}
The matrix $\bL$ from Proposition \ref{Proposition_Lagsubspace} satisfies the Riccatti equation
\begin{align}\Lambda^\top \bL+\bL \Lambda -\bL\bR_1\bL+ \bQ_1=0\label{L_riccatti},
\end{align}
and the spectrum of the closed loop linear system $\Lambda -\hat \bR \bL$ is in the left half plane.
\end{proposition}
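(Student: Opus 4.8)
The plan is to read the Riccati equation directly off the left-eigenvector relation and then to obtain stability by locating the graph subspace $\bar\bP=\bL\bar\bX$ inside the stable eigenspace of $\cH_1$. Since the rows $\bd_j^\top$ of $\bD$ are left generalized eigenvectors of $\cH_1$ associated with its positive-real-part eigenvalues, there is a matrix $\bN$ whose spectrum equals those eigenvalues and $\bD\cH_1=\bN\bD$. Partitioning $\bD=(\bD_1\;\;\bD_2)$ and comparing the two block columns yields
\begin{align}
\bD_1\Lambda-\bD_2\bQ_1&=\bN\bD_1,\\
-\bD_1\bR_1-\bD_2\Lambda^\top&=\bN\bD_2.
\end{align}
First I would insert $\bD_1=-\bD_2\bL$ (the definition $\bL=-\bD_2^{-1}\bD_1$) into the second line to get $\bN\bD_2=\bD_2(\bL\bR_1-\Lambda^\top)$, substitute this together with $\bD_1=-\bD_2\bL$ into the first line, and cancel the common left factor $\bD_2$. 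The cancellation is legitimate because $\bD_2$ is invertible: under Assumption \ref{assumption_hamiltonianeigenvalues} the matrix $\cH_1$ inherits the complementarity condition (\ref{assume_complimentary}), which is exactly the nonsingularity of the relevant block. What survives is $\Lambda^\top\bL+\bL\Lambda-\bL\bR_1\bL+\bQ_1=0$, i.e. (\ref{L_riccatti}). This step is routine algebra.

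For the spectral claim I would use the Riccati equation to show that the subspace $\mathrm{span}\left(\begin{smallmatrix}I\\\bL\end{smallmatrix}\right)$ defining the Lagrangian subspace of Proposition \ref{Proposition_Lagsubspace} is $\cH_1$-invariant: applying $\cH_1$ gives $\cH_1\left(\begin{smallmatrix}I\\\bL\end{smallmatrix}\right)=\left(\begin{smallmatrix}\Lambda-\bR_1\bL\\-\bQ_1-\Lambda^\top\bL\end{smallmatrix}\right)$, and (\ref{L_riccatti}) rewrites the lower block as $\bL(\Lambda-\bR_1\bL)$, so that $\cH_1\left(\begin{smallmatrix}I\\\bL\end{smallmatrix}\right)=\left(\begin{smallmatrix}I\\\bL\end{smallmatrix}\right)(\Lambda-\bR_1\bL)$. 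Hence the closed-loop matrix $\Lambda-\hat\bR\bL=\Lambda-\bR_1\bL$ is precisely the restriction of $\cH_1$ to this invariant subspace, and its spectrum is a subset of $\mathrm{spec}(\cH_1)$. It remains to show this subspace is the stable one. To locate it, I would invoke the Hamiltonian symmetry $\cH_1^\top J=-J\cH_1$ with $J=\left(\begin{smallmatrix}0&I\\-I&0\end{smallmatrix}\right)$, which sends each left eigenvector $\bd_j$ (eigenvalue $\mu_j$, $\mathrm{Re}\,\mu_j>0$) to a right eigenvector $J\bd_j$ of $\cH_1$ with eigenvalue $-\mu_j$. Since $J$ swaps the upper and lower $n$-blocks with a sign, stacking these as columns shows that the stable invariant subspace is spanned by $\left(\begin{smallmatrix}\bD_2^\top\\-\bD_1^\top\end{smallmatrix}\right)$, which after normalization equals $\mathrm{span}\left(\begin{smallmatrix}I\\\bL^\top\end{smallmatrix}\right)$.

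The main obstacle is that the invariant subspace carrying the closed-loop dynamics is $\mathrm{span}\left(\begin{smallmatrix}I\\\bL\end{smallmatrix}\right)$, whereas the $J$-duality produces the stable subspace as $\mathrm{span}\left(\begin{smallmatrix}I\\\bL^\top\end{smallmatrix}\right)$; these coincide exactly when $\bL$ is symmetric, so the crux is to prove $\bL=\bL^\top$. I would establish symmetry from the standard fact that the stable invariant subspace of a Hamiltonian matrix is Lagrangian with respect to $J$: writing it as $\mathrm{span}\left(\begin{smallmatrix}\bU_1\\\bU_2\end{smallmatrix}\right)$, isotropy forces $\bU_1^\top\bU_2=\bU_2^\top\bU_1$, which makes the gain $\bU_2\bU_1^{-1}$ symmetric, and identifying $\bU_2\bU_1^{-1}=\bL^\top$ then gives $\bL=\bL^\top$. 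With symmetry in hand, $\mathrm{span}\left(\begin{smallmatrix}I\\\bL\end{smallmatrix}\right)$ coincides with the stable subspace, so $\Lambda-\bR_1\bL$ is the restriction of $\cH_1$ to its stable eigenspace and therefore has all eigenvalues in the open left half plane. As a cross-check I would note the alternative route: transposing (\ref{L_riccatti}) shows $\bL^\top$ solves the same Riccati equation, and uniqueness of the stabilizing solution under Assumption \ref{assumption_hamiltonianeigenvalues} again yields $\bL=\bL^\top$.
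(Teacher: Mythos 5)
Your proposal is correct, and its first half is essentially the paper's own argument: both start from the left-eigenvector invariance relation $\bD\cH_1=\bar\Lambda\bD$, substitute $\bD_1=-\bD_2\bL$, and cancel the invertible factor $\bD_2$ to arrive at $\Lambda^\top\bL+\bL\Lambda-\bL\bR_1\bL+\bQ_1=0$; the paper organizes this as pre-multiplication by $\bD_2^{-1}$ and post-multiplication by $\bigl(\begin{smallmatrix}I\\ \bL\end{smallmatrix}\bigr)$, while you work block column by block column, but the algebra is identical. Where you genuinely diverge is the stability claim. The paper reads the closed-loop matrix off the second block column, $\bL\bR_1-\Lambda^\top=\bD_2^{-1}\bar\Lambda\bD_2$, and transposes to get $\Lambda-\bR_1\bL=-\bD_2^\top\bar\Lambda^\top(\bD_2^\top)^{-1}$, hence $\sigma(\Lambda-\bR_1\bL)=\sigma(-\bar\Lambda)$; note that this transposition silently replaces $\bL^\top$ by $\bL$, i.e.\ it uses symmetry of $\bL$, which the paper only \emph{asserts} in the proof of Proposition \ref{Proposition_Lagsubspace} (symmetry is imposed there because a gradient parameterization is sought) rather than derives. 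Your route --- showing $\cH_1\bigl(\begin{smallmatrix}I\\ \bL\end{smallmatrix}\bigr)=\bigl(\begin{smallmatrix}I\\ \bL\end{smallmatrix}\bigr)(\Lambda-\bR_1\bL)$ from the Riccati equation, identifying the stable eigenspace as ${\rm span}\bigl(\begin{smallmatrix}\bD_2^\top\\ -\bD_1^\top\end{smallmatrix}\bigr)={\rm span}\bigl(\begin{smallmatrix}I\\ \bL^\top\end{smallmatrix}\bigr)$ via the symmetry $\cH_1^\top J=-J\cH_1$, and then proving $\bL=\bL^\top$ from isotropy of the stable subspace --- closes exactly this gap: it turns the symmetry of $\bL$ into a theorem instead of an assumption, at the cost of invoking the Lagrangian property of stable subspaces of Hamiltonian matrices. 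One caveat: your concluding ``cross-check'' (transposing the Riccati equation and appealing to uniqueness of the stabilizing solution) is circular as stated, since at that point only $\bL^\top$ is known to be stabilizing, not $\bL$; it is dispensable, however, because the isotropy argument already completes the proof.
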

Refer to Appendix for the proof. 
The Lagrangian submanifold can now be computed as follows. From (\ref{lmanifold}), we have  
$$\bar \bP=\bL \bar \bX \implies {\rm from \; Eq. (\ref{coordinatechange})}\implies e^{-\Lambda^\top t}\bP=\bL e^{\Lambda t}\bX.$$
Pre-multiplying the above by $\frac{\partial \Phi}{\partial \bx}$ and using (\ref{xp_eqn}), we obtain
\[\bp=\left(\frac{\partial \Phi}{\partial \bx}\right)e^{-\Lambda^\top t}\bP=\left(\frac{\partial \Phi}{\partial \bx}\right)\bL e^{\Lambda t}\bX=\left(\frac{\partial \Phi}{\partial \bx}\right)\bL \Phi. \]
We thus obtain the following expression for the approximate Lagrangian submanifold
\begin{align}\hat \cL=\left\{(\bx,\bp): \bp=\left(\frac{\partial \Phi}{\partial \bx}\right)^\top\bL \Phi\right\}.\label{approximate_lagmanifold}
\end{align}
We know that the Lagrangian submanifold is obtained as a gradient of a scalar value function i.e., $\bp=\frac{\partial V}{\partial \bx}$. Hence from (\ref{approximate_lagmanifold}), we obtain 
\begin{align}
V(\bx)=\frac{1}{2}\Phi(\bx)^\top \bL \Phi(\bx)\label{Procedure1V}.
\end{align} as the approximate HJ solution. This suggests that the approximated optimal cost is quadratic in Koopman eigenfunction coordinates where following the results of Proposition \ref{prop_riccati_lagsubspace}, the matrix $\bL$ is obtained as a solution of the Riccatti equation. 
\begin{remark}\label{remark_riccatti}
We observe from (\ref{approximate_lagmanifold}) that the Riccatti solution from the linearized HJ equation is obtained as a particular case of the Lagrangian submanifold construction proposed in procedure one. In particular, the special case will correspond to the principal (generalized) eigenfunctions  $\Phi(\bx)$ being approximated as a linear function of $\bx$, i.e., $\Phi(\bx)=\bV^\top \bx$ with $\bV^\top \bA=\Lambda \bV^\top$. In this case, we have 
\[V(\bx)=\frac{1}{2}\Phi^\top(\bx)\bL \Phi(\bx)=\frac{1}{2}\bx^\top \bV\bL \bV^\top \bx=\frac{1}{2}\bx^\top \bP_r\bx.\]

Since we know that the matrix $\bL$ satisfies the Riccatti equation  (\ref{L_riccatti})  it then follows that the matrix $\bP_r$ satisfies  Riccatti equation (\ref{riccati_equation}).
This proves that the  Riccatti solution is embedded in the proposed approximation of the HJ solution obtained using procedure one. 
\end{remark}
{\color{black}
\begin{remark} Extending the HJ solution from the infinite horizon to the finite horizon optimal control problem using Procedure One is relatively straightforward. In particular, for the finite time horizon problem, the HJ solution can be approximated as $V(\bx,t)=\frac{1}{2}\Phi^\top(\bx)\bL(t)\Phi(\bx)$, where $\bL(t)$ will be the solution of the time-varying Riccatti equation (\ref{L_riccatti}) with right-hand side replaced with $\dot \bL(t)$ and with appropriate constraint at terminal time $T$ on the state $\bL(T)$.  
\end{remark}
}
\noindent The procedure one can be summarized as follows.

\begin{procedure}{Procedure 1} \label{alg1}
{\color{black}
\begin{enumerate}
    \item Compute the principal (generalized) eigenfunction, $\Phi(\bx)$, for system $\dot \bx=\bff(\bx)$ with eigenmatrix $\Lambda$ in real Jordan canonical form. 

    \item With $\bR_1$ and $\bQ_1$ as given in Eqs. (\ref{assumeR1}) and (\ref{assumeQ1}) respectively, solve the following Riccatti equation for $\bL$
    \[\Lambda^\top \bL+\bL \Lambda -\bL\bR_1\bL+ \bQ_1=0\]
    
   \item The approximation of the HJ solution is given by $\frac{1}{2}\Phi^\top(\bx)\bL \Phi(\bx)$.
\end{enumerate}}
\end{procedure}
\subsection{Koopman Eigenfunctions of the Hamiltonian System : Procedure Two}\label{section_p2}
While the procedure one makes use of eigenfunctions of the uncontrolled system, $\dot \bx=\bff(\bx)$, the procedure two relies on the eigenfunctions of the Hamiltonian system for the approximation of the Lagrangian submanifold and the HJ solution. The Hamiltonian system is repeated here for convenience.   
\begin{align}
    \dot \bx&={\bf f}(\bx)-\bR(\bx)\bp\nonumber\\
    \dot \bp&=-\left(\frac{\partial {\bf f}}{\partial \bx}\right)^\top\bp+\frac{1}{2}\left(\frac{\partial \bp^\top \bR(\bx)\bp}{\partial \bx}\right)^\top-\frac{\partial q}{\partial \bx}^\top\label{Ham_dynamicsnew}.
\end{align}

{\color{black}
We write the above dynamical system compactly  as $\dot \bz=\bF(\bz)$, where $\bz=(\bx^\top,\bp^\top)^\top$ and $\bF(\bz)$ as the right hand side of (\ref{Ham_dynamicsnew})}. The linearization of the nonlinear Hamiltonian system at the origin is given by the Hamiltonian matrix ${\cal H}_0$ in Eq. (\ref{ham_matrix1}).
Following Assumption \ref{assume_system}, we know that the equilibrium point of the Hamiltonian system (\ref{Ham_dynamicsnew}) is  hyperbolic, with eigenvalues forming a mirror image along the imaginary axis. Hence, the Koopman operator associated with the $2n$-dimensional Hamiltonian system admits $2n$ principal (generalized) eigenfunctions $\Psi(\bz)\in \cC^2$ associated with the eigenmatrix $\Lambda_{\cH}$ in real Jordan canonical form. The matrix $\Lambda_\cH$ is also the eigenmatrix of $\cH_0$.  

We are only interested in computing $n$ out of the $2n$ principal (generalized) eigenfunctions. Which of the $n$  eigenfunctions to compute will depend on the HJ equation under consideration. For example, in the optimal control problem, the Lagrangian submanifold is the subset of the stable manifold \cite[Proposition 11.2.2] {van_der_shaft_book} as the optimal control is stabilizing. Hence, for the optimal control problem, we would compute the principal (generalized) eigenfunctions corresponding to the eigenvalues with positive real part, as the joint zero-level curves of the unstable eigenfunctions constitute the stable manifold (Corollary \ref{proposition_mainfolds}). 
Let $\Psi_u(\bz):\mR^{2n}\to \mR^n$ be the principal (generalized) eigenfunctions corresponding to the eigenvalues with positive real part of $\cH_0$. Unlike procedure one where the eigenfunctions of the Koopman operator for uncontrolled system $\dot \bx=\bff(\bx)$ were only defined in the domain $\cP_\bx\subseteq \mR^n$ containing the origin,  $\Psi(\bz)$ are well defined globally as the optimal control is defined globally. However, for the purpose of computation, we restrict the computation domain of $\Psi(\bz)$ to be $\cP_\bz$. The stable manifold is then defined using the joint-zero level set of principal (generalized) eigenfunctions as
\begin{align}\cM_s=\{\bz\in \cP_\bz : \Psi_u(\bz)=0\}.\label{zerolevelcurve}
\end{align}
The Lagrangian submanifold is obtained from the stable manifold as a graph of the gradient of the scalar function $V(\bx)$, i.e.,
\begin{align}
\cL=\left\{\bz\in \cM_s: \left(\bx,\bp=\frac{\partial V}{\partial \bx}^\top\right)\right\}.\label{zerolevelLagrangian}
\end{align}
The scalar valued function, $V(\bx)$, will be the solution of the HJ equation. 
\begin{remark}\label{remark_basischoice} Our ultimate objective is to approximate the Lagrangian submanifold, $\cL$, in Eq. (\ref{zerolevelLagrangian}) and the HJ solution, $V$, but not the principal (generalized) eigenfunctions, $\Psi_u$, of the Hamiltonian system per se. The principal (generalized) eigenfunctions of the Hamiltonian system are used as an intermediate step towards approximation of the HJ solution. With this ultimate goal in mind, we observe that the 
 Lagrangian submanifold is a subset of stable invariant manifold characterized by a set of all $(\bx,\bp) $ such that $\bp-\frac{\partial V}{\partial \bx}^\top =0$, i.e., linear function of $\bp$. This linear paramaterization of the Lagrangian submanifold in variable $\bp$ is crucial in selecting basis functions for approximating the Koopman eigenfunctions. In particular, we choose the basis functions  to be linear in the variable $\bp$ for the approximation of the principal (generalized)  eigenfunctions.
\end{remark}
Our main contribution towards approximating the Koopman principal (generalized) eigenfunctions is presented in Section \ref{section_compute}. In this section, we will assume that the principal (generalized) eigenfunctions are approximated and demonstrate their application for the computation of the Lagrangian submanifold and the HJ solution. Towards this goal, we first write the Hamiltonian system (\ref{Ham_dynamicsnew})  and split it into linear and nonlinear parts as 
\begin{align}
\dot \bz=\bF(\bz)={\cal H}_0 \bz+\bF_n(\bz),
\end{align}
where, $\bz=(\bx^\top,\bp^\top)^\top$, ${\cal H}_0$ is given in (\ref{ham_matrix1}) and $\bF_n(\bz)=\bF(\bz)-{\cal H}_0\bz$. We now introduce a few notations to facilitate the following discussion. 
Let $\bW^\top$ consists of the left (generalized) eigenvectors of the Hamiltonian matrix ${\cal H}_0$ with eigenmatrix $\Lambda_\cH$  in real Jordan canonocial form i.e., $\bW^\top{\cal H}_1=\Lambda_\cH \bW^\top$. The matrix $\bW^\top \in \mR^{2n\times 2n}$ admits following decomposition 
\begin{align}\bW^\top=\begin{pmatrix}\bW_u^\top\\\bW_s^\top\end{pmatrix}\in\mR^{2n\times 2n}\label{W_def},
\end{align}
where $\bW_u^\top\in \mR^{n\times 2n},\bW_s^\top\in \mR^{n\times 2n}$ corresponds to (generalized) unstable and (generalized) stable linear subspace respectively.
Following the decomposition of the Hamiltonian system into linear and nonlinear parts, the principal (generalized) eigenfunctions can also be decomposed into linear and nonlinear parts as follows. 
\begin{align}\mR^{n}\ni\Psi_u(\bz)=\bW_u^\top\begin{pmatrix}\bx^\top&\bp^\top\end{pmatrix}^\top+\bU\Gamma_M(\bz)\label{zz1},
\end{align}
where, $\bW_u^\top \bz$ is the linear part and $\bU\Gamma_M(\bz)$ is the nonlinear part. Similar decomposition also applies to principal (generalized) eigenfunctions corresponding to eigenvalue with negative real parts.
The nonlinear part of the principal (generalized) eigenfunction is approximated by the term $\bU \Gamma_M(\bz)$, where $\bU\in \mR^{n\times M}$ and $\Gamma_M(\bz): \mR^{2n}\to \mR^M$ are the finite basis functions used in the approximation. 
Our proposed computational framework for approximating the nonlinear part of the principal (generalized) eigenfunctions is presented in Section \ref{section_compute}. The main contribution and conclusion of Section \ref{section_compute} is that the matrix $\bU$ can be obtained as a solution to the least square problem using data.  
Following Remark \ref{remark_basischoice}, the basis functions are assumed to be of the form
\begin{align} \Gamma_M(\bz)=\begin{pmatrix}\Xi_1^\top(\bx) & (\Xi_2(\bx)\bp)^\top\end{pmatrix}^\top\in \mR^M\label{basis_proce2}.
 \end{align}
where $\Xi_1(\bx)\in \mR^N$ and $\Xi_2(\bx)\in \mR^{(M-N)\times n}$.
Since $\bU \Gamma_M(\bz)$ captures the pure nonlinear part of the principal (generalized) eigenfunctions, we chose basis function $\Gamma_M(\bz)$ such that $\frac{\partial \Gamma_M}{\partial \bz}(0)=0$. 
Given the form of $\bW_u$ and $\Gamma_M(\bz)$, we can decompose (\ref{zz1}) to determine the stable manifold as the joint zero-level set of principal (generalized)  eigenfunctions corresponding to eigenvalues with positive real part i.e., 
\begin{align}
   0=\Psi_u(\bx,\bp):= \bW_{u}^\top \begin{pmatrix}\bx\\\bp\end{pmatrix}+\begin{pmatrix}\bU_{11}&\bU_{12}\end{pmatrix}\begin{pmatrix}\Xi_1(\bx)\\\Xi_2(\bx)\bp\end{pmatrix}\label{stable_mm},
\end{align}
where we split the matrix $\bU$ with $\bU_{11}\in \mR^{n\times N}, \bU_{12}\in \mR^{n\times (M-N)}$. Following (\ref{zerolevelcurve}) and (\ref{zerolevelLagrangian}), the objective is to find Lagrangian submanifold of the form $\bp=\frac{\partial V}{\partial \bx}^\top$ for some unknown function $V$ such that $\Psi_u(\bx,\bp=\frac{\partial V}{\partial \bx}^\top)=0$. We seek the following form of the Lagrangian submanifold consisting of linear and nonlinear terms 
\begin{align}
   \bp= \frac{\partial V}{\partial \bx}^\top=:\bp_l+\bp_n\label{param_lnl_lag}.
\end{align} 
Substituting the assumed form of the Lagrangian submanifold from (\ref{param_lnl_lag}) in (\ref{stable_mm}), we obtain
\begin{align}&\bW_{u1}^\top \bx+\bW_{u2}^\top(\bp_l+\bp_n)+\bU_{11}\Xi_1(\bx)\nonumber\\&+\bU_{12}\Xi_2(\bx)(\bp_l+\bp_n)=0\label{zerolevelcurveeq}.
\end{align}
Equating the linear and nonlinear terms, we obtain
\begin{align*}
\bW_{u1}^\top \bx+\bW_{u2}^\top \bp_l=0\implies \bp_l:=\bJ_l\bx=-(\bW_{u2}^\top)^{-1}\bW_{u1}\bx
\end{align*}
where, the invertibility of matrix $\bW_{u2}$ follows from the fact the Hamiltonian matrix ${\cal H}_0$ satisfies Assumption \ref{assumption_hamiltonianeigenvalues} (refer to the proof of Proposition \ref{Proposition_Lagsubspace} in the Appendix). The optimal $\bJ_l^\star$ and hence $\bp_l^\star$ is given by 
\begin{align}
    \bp_l^\star=\bJ_l^\star \bx=-(\bW_{u2}^\top)^{-1}\bW_{u1}^\top \bx\label{linearterm}.
\end{align}
We notice, following results of Propositions \ref{Proposition_Lagsubspace} and \ref{prop_riccati_lagsubspace}, that $\bJ_l^\star$ matches with the solution of Riccatti equation. In particular $\bP_r:=\bV\bJ_l^\star \bV^\top $, where $\bV^\top \bA=\Lambda \bV$ satisfies the following Riccatti equation
\begin{align}\bA^\top \bP_r+\bP_r \bA-\bP_r\bR_0\bP_r+ \bQ_0=0\label{rr}.
\end{align}
Substituting the $\bp_l^\star$ in Eq. (\ref{zerolevelcurveeq})  we obtain following equation for the nonlinear part
\begin{align}
\bW_{u_2}^\top \bp_n+\bU_{11}\Xi_1(\bx)+\bU_{12}\Xi_2(\bx)\bp_l^\star+\bU_{12}\Xi_2(\bx)\bp_n=0
\label{ss}.
\end{align}
$\bp_l^\star=\bJ_l^\star\bx$ is a linear function of $\bx$ (Eq. (\ref{linearterm})), define
\begin{align}\bG_1(\bx):=\bU_{11}\Xi_1(\bx)+\bU_{12}\Xi_2(\bx)\bJ_l^\star \bx\in \mR^n\nonumber\\\bG_2(\bx):=\bW_{u_2}^\top +\bU_{12}\Xi_2(\bx)\in \mR^{n\times n}\label{defG12}.\end{align} With this definition, we can write (\ref{ss}) as
\begin{align}
\bG_1(\bx)+\bG_2(\bx)\bp_n=0\label{zeroLM}.
\end{align}
Under the assumption that $\bG_2(\bx)$ is invertible, we can obtain $\bp_n$ as
\begin{align}\bp_n^\star= -\bG_2(\bx)^{-1}\bG_1(\bx)\label{nonlinearterm}.
\end{align}
The invertibility assumption on $\bG_2(\bx)$ can be viewed as the nonlinear generalization of invertibility of matrix $\bW_{u2}$. The invertibility of $\bW_{u_2}$ follows  from Assumption \ref{assumption_hamiltonianeigenvalues} (refer to the proof of Proposition \ref{Proposition_Lagsubspace} in Appendix).
Combining (\ref{linearterm}) and (\ref{nonlinearterm}), the approximation of the Lagrangian submanifold is given by
\begin{align}
\bp^\star=-(\bW_{u2}^\top)^{-1}\bW_{u1}^\top \bx- \bG_2(\bx)^{-1}\bG_1(\bx) \label{LMproce2}.
\end{align}
There is a  similarity between the formula for linear $\bp_l$ in Eq. 
 (\ref{linearterm}) and nonlinear, $\bp_n$ in Eq. (\ref{nonlinearterm}). Both these formula has same functional form where $\bG_2(\bx)$ and $\bG_1(\bx)$ can be viewed as nonlinear generalization of $\bW_{u_2}$ and $\bW_{u_1}$ respectively. It suffices to use (\ref{LMproce2}) if we are only interested in computing the optimal control. Alternatively, if we are interested in computing the optimal cost, we proceed as follows. Let $\Xi_3(\bx)\in \mR^{M_1}$ be the basis function satisfying $\frac{\partial \Xi_3(0)}{\partial \bx}=0$ and $V_n(\bx)=\frac{1}{2}\Xi_3^\top(\bx) \bJ_n \Xi_3(\bx)$, for some positive $\bJ_n\in \mR^{M_1\times M_1}$, be the parametrization of the optimal cost function corresponding to terms containing higher than quadratic nonlinearity. Then substituting for $\bp_n=\frac{\partial V_n}{\partial \bx}^\top=\frac{\partial \Xi_{3}}{\partial \bx}^\top \bJ_n \Xi_3(\bx)=\bar \Xi_3 {\rm vec}({\bJ_n})$ in Eq. (\ref{zeroLM}). The ${\rm vec}(\bJ_n)\in \mR^{\frac{M_1(M_1+1)}{2}}$ is the vector form of the matrix $\bJ_n$ and $\bar \Xi_3(\bx)\in \mR^{n\times M_1}$ is obtained from $\frac{\partial \Xi_3}{\partial \bx}$ and $\Xi_3(\bx)$. Using (\ref{zeroLM}), we can write the following optimization problem to solve for $\bJ_n$.
\begin{align}
\min_{\bJ_n\geq 0}\|{\cal B}{\rm vec}(\bJ_n)-\cA\|\label{J_optimization},
\end{align}
{\small 
\[{\cal B}=\begin{pmatrix}\bar \bG_2(\bx_1)^\top,\ldots, \bar \bG_2(\bx_L)^\top\end{pmatrix}^\top,\;-{\cal A}=\begin{pmatrix}\bG_1(\bx_1)^\top,\ldots, \bG_1(\bx_L)^\top\end{pmatrix}^\top\]}
with $\bar G_2(\bx):=\bG_2(\bx)\bar \Xi_3(\bx)$ and  $\{\bx_k\}_{k=1}^L$ are the sampled data points. 
The solution of the above optimization problem, $\bJ_n^\star$, will be used to write the optimal cost function as 
\begin{align}
V(\bx)=\frac{1}{2}(\bx^\top \bJ_l^\star \bx+\Xi_3(\bx)^\top \bJ_n^\star \Xi_3(\bx)). \label{HJsolutionproc2}
\end{align}
Based on (\ref{LMproce2}), we obtain following expression for the optimal control
\begin{align}\bu^\star=-\bD^{-1}\bg^\top\left((\bW_{u2}^\top)^{-1}\bW_{u1}^\top \bx+\bG_2(\bx)^{-1}\bG_1(\bx) \right) \label{formula1_procedure2}.
\end{align}
{\color{black}
\begin{remark} The results from Section \ref{section_compute} prove that the approximated principal eigenfunctions are optimal Galerkin projections of the true principal eigenfunctions on the finite-dimensional basis function. In particular, using the results of Section \ref{section_compute}, it follows that the unstable eigenfunction and its zero-level curve (i.e., stable manifold) in  Eq. (\ref{stable_mm}) and (\ref{zerolevelcurveeq}) are optimal Galerkin projection of the true stable manifold on the finite-dimensional basis spanned by $\Gamma_M(\bz)$. The Lagrangian manifold is the subset of the stable manifold, hence the analytical construction of the Lagrangian submanifold and optimal control in Eqs. (\ref{LMproce2}) and (\ref{formula1_procedure2}) are the optimal Galerkin projection of true Lagrangian manifold and control input on the finite-dimensional basis spanned by $\Gamma_M(\bz)$.
\end{remark}}
\noindent Our approach for the construction of the Lagrangian submanifold can be summarized as following procedure.
\begin{procedure}{Procedure 2}
{\color{black}
\begin{enumerate}

    \item Choose a finite-dimensional basis function, $\Gamma_M(\bx,\bp)$, as given in Eq.  (\ref{basis_proce2}) for the approximation of principal (generalized) eigenfunctions, $\Psi(\bz)$, of the Koopman operator associated with the Hamiltonian system (\ref{Ham_dynamicsnew}).
    \item Construct the stable manifold of the Hamiltonian system as a joint zero-level set of the principal (generalized) eigenfunctions, $\Psi_u(\bz)$, corresponding to eigenvalues with positive real part (Eq. (\ref{stable_mm})).
    \item Use coefficient matrices used in the expansion of principal (generalized) eigenfunction to define $\bG_1$ and $\bG_2$ in Eq. (\ref{defG12}) and linear and nonlinear parts of the Lagrangian manifold in (\ref{linearterm}) and (\ref{nonlinearterm}) respectively.
    \item Use $\bG_1$, $\bG_2$, and linear manifolds to obtain the approximation of Lagrangian submanifold (\ref{LMproce2}). 
    
    

\end{enumerate}
}
\end{procedure}


\subsection{Computation of Principal Eigenfunctions: Convergence Analysis}\label{section_compute}
 There are several approaches available for the computation of the Koopman operator, including results that provide sample complexity-based error bounds in the computation \cite{housparse,klus2020eigendecompositions,korda2018convergence,korda2020optimal}. This section presents results for directly approximating the principal eigenfunctions without approximating the Koopman operator itself. We present the results for approximating the principal (generalized) eigenfunctions for system $\dot \bz=\bF(\bz)$. Also for notations convenience and simplicity of presentation, we present results for the approximation of eigenfunctions with real eigenvalues. The results for the multiple eigenvalues and complex eigenvalues will follow along similar lines. We decompose the system (\ref{odesys}) into linear and nonlinear parts as 
 \begin{align}
 \dot \bz=\bF(\bz)=\bE\bz+(\bF(\bz)-\bE \bz)=:\bE \bz+\bF_n(\bz)\label{sys_decompose}.
 \end{align} 
 Let $\lambda$ be the Koopman generator's eigenvalues and also of $\bE$. The principal (generalized) eigenfunction corresponding to eigenvalue $\lambda$ admits the decomposition into linear and nonlinear parts. 
 \begin{align}
 \psi_\lambda(\bz)=\bw^\top \bz+h(\bz) \label{eigen_decompose},
 \end{align}
 where $ \bw^\top \bz$ and $h(\bz)$ are the principal eigenfunction's linear and purely nonlinear parts, respectively. Substituting (\ref{eigen_decompose}) in (\ref{eig_koopmang}) and using (\ref{sys_decompose}), we obtain following equations to be satisfied by $ \bw$ and $h(\bz)$
\begin{align}
\bw^\top \bE=\lambda  \bw^\top,\;\;\frac{\partial h(\bz)}{\partial \bz}\bF(\bz)-\lambda h(\bz)+\bw^\top \bF_n(\bz)=0\label{linear_nonlinear_eig}.
\end{align}
So, the linear part of the principal eigenfunction can be found using the left eigenvector with eigenvalue $\lambda$ of matrix $\bE$, and the nonlinear term satisfies the linear partial differential equation. 
We make the following assumption on the basis functions used in the approximation of the nonlinear term $h(\bz)$. 
\begin{assumption}\label{assume_basisfunction} For the purpose of computation, we restrict the domain to $\cZ$ a subset of $\mR^p$. 
We assume that the basis function $\Gamma_M=\{\gamma_j\}_{j=1}^M$ with $\gamma_j: \cZ\to \mR$ are $\mu$ linearly independent i.e.,
\begin{align}
\mu\{\bz\in \cZ: \bc^\top \Gamma(\bz)=0\}=0.
\end{align}
Also with no loss of generality we assume that  $\{\frac{\partial \gamma_j}{\partial \bx}\bF-\lambda \gamma_j\}_{j=1}^M$ for any given eigenvalue $\lambda$ of $\bE$ are also $\mu$ linearly independent. Because if not then  there exists a vector $\bc$ such that $\frac{\partial \bc^\top \Gamma_M}{\partial \bz}\bF=\lambda \bc^\top \Gamma_M$. Hence, $\bc^\top \Gamma_M$ is precisely the nonlinear part of the eigenfunction corresponding to eigenvalue $\lambda$ and can be taken out of consideration to approximate. 
\end{assumption}
Typically, the measure $\mu$ can be taken to be equivalent to Lebesgue. 
Let $\cF_M={\rm span}\{ \gamma_1,\ldots,  \gamma_M\}$ and $\cF=\cL_2(\mu)$ for some positive measure $\mu$ on $\cZ$ with inner product $\left<f,g\right>_{\cL_2(\mu)}=\int_\cZ f(\bz)g(\bz) d\mu(\bz)$. 
\begin{definition} Given a countable set of functions $\Gamma=\{\gamma_j\}_{1}^\infty$, with $\gamma_j: \cZ\to \mR$, define ${\cal G}(\Gamma,\cZ)$ to be set of all linear combinations of elements in $\Gamma$ that converge {\color{black}$\mu$ almost everywhere in $\cZ$. }
\end{definition}
\noindent The main results of this section relies on the following Lemma and the proof follows along the lines of proof of  \cite[Lemma 14] {beard1997galerkin} on the Galerkin approximation. 
\begin{lemma}\label{lemma_galkerin}
If the set $\{\gamma_j\}_1^\infty$ is linearly independent and $\frac{\partial \gamma_j}{\partial \bz}\bF-\lambda \gamma_j\in {\cal G}(\Gamma,\cZ)$, then for all $M$,
\begin{align}
{\rm rank}(\bJ_\mu)=M\label{Jmudef},
\end{align}
where $\bJ_\mu:=\left< (\frac{\partial  \Gamma_M}{\partial \bz}\bF-\lambda \Gamma_M),  \Gamma_M\right>_m$\footnote{subscripts $m(v)$ are used to signify that matrix (vector) respectively.} 
\end{lemma}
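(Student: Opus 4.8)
The plan is to argue by contradiction, showing that the $M\times M$ matrix $\bJ_\mu$ cannot be singular. First I would fix notation by writing $\eta_i:=\frac{\partial \gamma_i}{\partial \bz}\bF-\lambda \gamma_i$, so that the $(i,j)$ entry of $\bJ_\mu$ is $\langle \eta_i,\gamma_j\rangle_{\cL_2(\mu)}$ and the claim ${\rm rank}(\bJ_\mu)=M$ is equivalent to nonsingularity. Suppose instead ${\rm rank}(\bJ_\mu)<M$; then there is a nonzero vector $\bc=(c_1,\ldots,c_M)^\top$ in the left null space, i.e. $\sum_{i=1}^M c_i\langle \eta_i,\gamma_j\rangle=0$ for every $j=1,\ldots,M$.

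Next I would collect these $M$ scalar relations into a single function using linearity of the inner product. Define $\eta:=\sum_{i=1}^M c_i\,\eta_i=\frac{\partial(\bc^\top \Gamma_M)}{\partial \bz}\bF-\lambda\,\bc^\top\Gamma_M$, so that the null-space relations read precisely $\langle \eta,\gamma_j\rangle=0$ for all $j\le M$. Both hypotheses now enter. On one hand, each $\eta_i\in{\cal G}(\Gamma,\cZ)$, hence the finite combination $\eta$ again lies in ${\cal G}(\Gamma,\cZ)$ and therefore admits a ($\mu$-a.e. convergent) expansion in the basis $\{\gamma_k\}$. On the other hand, Assumption \ref{assume_basisfunction} guarantees that $\{\eta_i\}_{i=1}^M$ is $\mu$-linearly independent, so $\bc\neq 0$ forces $\eta\neq 0$ as an element of $\cL_2(\mu)$, while the linear independence of $\{\gamma_j\}$ ensures $\mathcal{F}_M$ genuinely has dimension $M$ so that $\bc$ encodes a nontrivial dependency.

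The decisive step is to combine the basis expansion of $\eta$ with its orthogonality to the $\gamma_j$ to obtain $\langle \eta,\eta\rangle_{\cL_2(\mu)}=0$: expanding one copy of $\eta$ in the basis and pairing term-by-term against the orthogonality relations collapses the self-inner-product to zero, whence $\eta=0$ $\mu$-almost everywhere. This contradicts $\eta\neq 0$ established above, so no nontrivial left null vector exists and ${\rm rank}(\bJ_\mu)=M$. I expect this final step to be the main obstacle, precisely because orthogonality is only asserted against the finite set $\gamma_1,\ldots,\gamma_M$: to kill $\langle\eta,\eta\rangle$ one must guarantee that the basis expansion of $\eta$ is supported on (the closure of) exactly the span against which it is orthogonal, so the argument must use the nested structure $\Gamma_M\subset\Gamma$ together with the membership $\eta_i\in{\cal G}(\Gamma,\cZ)$, rather than mere completeness of $\{\gamma_k\}$ in $\cL_2(\mu)$. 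I would close by noting that this contradiction scheme follows the template of \cite[Lemma 14]{beard1997galerkin}, with $\eta_i=\frac{\partial \gamma_i}{\partial \bz}\bF-\lambda\gamma_i$ playing the role of the transported basis functions used there.
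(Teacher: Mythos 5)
Your contradiction setup is sound as far as it goes: rank deficiency of $\bJ_\mu$ does produce a nonzero $\bc$ such that $\eta:=\sum_{i=1}^{M}c_i\bigl(\tfrac{\partial\gamma_i}{\partial\bz}\bF-\lambda\gamma_i\bigr)$ satisfies $\langle\eta,\gamma_j\rangle_{\cL_2(\mu)}=0$ for $j=1,\dots,M$, and Assumption \ref{assume_basisfunction} does force $\eta\neq 0$. The genuine gap is exactly the step you flag as the ``main obstacle'' and then leave open: the collapse to $\langle\eta,\eta\rangle=0$. Membership in ${\cal G}(\Gamma,\cZ)$ only yields an expansion $\eta=\sum_{k=1}^{\infty}d_k\gamma_k$ over the \emph{entire} basis, so
\begin{align*}
\langle\eta,\eta\rangle=\sum_{k=1}^{M}d_k\langle\gamma_k,\eta\rangle+\sum_{k>M}d_k\langle\gamma_k,\eta\rangle,
\end{align*}
and your orthogonality relations annihilate only the first block; the terms with $k>M$ are uncontrolled. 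Worse, this step is not merely unproved but cannot be proved from the hypotheses you invoke. Take $\cZ=[-1,1]$, $\mu$ Lebesgue, $\gamma_j(z)=z^{j+1}$, $\bF(z)=\lambda z-\tfrac{7\lambda}{10}z^{3}$, and $M=1$. Then $\eta_1=\tfrac{\partial\gamma_1}{\partial z}\bF-\lambda\gamma_1=\lambda z^{2}-\tfrac{7\lambda}{5}z^{4}$ lies in ${\cal G}(\Gamma,\cZ)$ (it equals $\lambda\gamma_1-\tfrac{7\lambda}{5}\gamma_3$), is nonzero $\mu$-a.e., and yet $\langle\eta_1,\gamma_1\rangle=\int_{-1}^{1}\bigl(\lambda z^{4}-\tfrac{7\lambda}{5}z^{6}\bigr)dz=0$. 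So a nonzero $\eta$ orthogonal to all of $\gamma_1,\dots,\gamma_M$ can genuinely occur, and no Parseval-style argument can rule it out without assumptions beyond linear independence and membership in ${\cal G}(\Gamma,\cZ)$.

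For comparison, the paper's proof takes a different route: it expands each $\eta_j=\bc_j^\top\Gamma$, factors $\bJ_\mu=\langle\Gamma,\Gamma_M\rangle_m{\cal D}$ with ${\cal D}=[\bc_1,\dots,\bc_M]$, proves ${\rm rank}({\cal D})=M$ from the Gram matrix ${\cal D}^\top\langle\Gamma,\Gamma\rangle_m{\cal D}$ of the independent functions $\eta_j$, and notes ${\rm rank}(\langle\Gamma,\Gamma_M\rangle_m)=M$. Observe, though, that passing from these two rank statements to ${\rm rank}(\bJ_\mu)=M$ is precisely the requirement that the range of ${\cal D}$ avoid the kernel of $\langle\Gamma,\Gamma_M\rangle_m$ --- and a nonzero vector in that intersection corresponds exactly to your function $\eta$: a nontrivial combination of the $\eta_i$ that is $\mu$-orthogonal to $\cF_M$. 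So you have correctly located the crux of the lemma, but your proposal does not resolve it; closing it requires structural information linking the expansion coefficients $\bc_j$ to the finite section of the basis (for instance, conditions ensuring the operator $h\mapsto\tfrac{\partial h}{\partial\bz}\bF-\lambda h$ does not map ${\rm span}\{\gamma_1,\dots,\gamma_M\}$ into the $\mu$-orthogonal complement of $\cF_M$), which neither your argument nor the bare hypotheses supply.
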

\begin{proof} Since, $\{\gamma_j\}_1^\infty$ is linearly independent and $\frac{\partial \gamma_j}{\partial \bz}\bF-\lambda \gamma_j\in {\cal G}(\Gamma,\cZ)$, we have 
\[\frac{\partial \gamma_j}{\partial \bz}\bF-\lambda \gamma_j=\sum_{k=1}^\infty c_{kj}\gamma_j(\bz)=\bc_j^\top \Gamma(\bz).\]
We can write
{\small 
\begin{align}
&{\bJ_\mu}=\begin{pmatrix}\left<\bc_1^\top \Gamma,\gamma_1\right>&\hdots&\left<\bc_M^\top \Gamma,\gamma_1\right>\\
\vdots&\hdots&\vdots\\ \left<\bc_1^\top \Gamma,\gamma_M\right>&\hdots&\left<\bc_M^\top \Gamma,\gamma_M\right>
\end{pmatrix}\nonumber\\
=&\begin{pmatrix}\left<\Gamma,\Gamma_M\right>_m\bc_1&\hdots&\left<\Gamma, \Gamma_M\right>_m\bc_M\end{pmatrix}=\left<\Gamma, \Gamma_M\right>_m{\cal D}\label{Jmudefinition},
\end{align}
}
where ${\cal D}:=[\bc_1,\ldots,\bc_M]$, and hence ${\rm rank}({\bJ_\mu})={\rm rank}(\left<\Gamma, \Gamma_M\right>{\cal D})$. Now rank of $\left<\Gamma, \Gamma_M\right>$ is $M$ as $\{\gamma_j\}$ are linearly independent. Hence the proof will follow if we can show that rank of $\cal D$ is $M$. Following Assumption \ref{assume_basisfunction}, we know that the set $\{\frac{\partial \gamma_j}{\partial \bz }\bF-\lambda \gamma_j\}_1^\infty=\{\bc_j^\top\Gamma\}_1^\infty$ is linearly independent and hence the Gram matrix for $M$ of these vectors
\[\begin{pmatrix}\left<\bc_1^\top \Gamma,\bc_1^\top \Gamma\right>&\hdots&\left<\bc_1^\top \Gamma,\bc_M^\top \Gamma\right>\\\vdots&&\vdots\\\left<\bc_M^\top \Gamma,\bc_1^\top \Gamma\right>&\hdots&\left<\bc_M^\top \Gamma,\bc_M^\top \Gamma\right>
\end{pmatrix}={\cal D}^\top\left<\Gamma, \Gamma\right>_m{\cal D}\]
has rank equal to $M$. Now since $\Gamma$ is linearly independent, ${\rm rank}(\left<\Gamma,\Gamma\right>_m)=M$ and hence rank of $\cal D$ is equal to $M$. \qed
\end{proof}

\begin{theorem}\label{theorem_computation}  
Under Assumption \ref{assume_basisfunction}, the optimal Galerkin projection for the nonlinear term of the eigenfunction, i.e., $h(\bz)$ corresponding to eigenvalue $\lambda$ is given by $ \Gamma_M^\top(\bz) \Theta^\star$, where $ \Theta^\star$ is obtained as the unique solution of following linear equations.
\begin{align}\bJ_\mu\Theta=-\bb_\mu, \label{gproj}
\end{align}
where, $\bb_\mu:=\left<\bw^\top \bF_n,\Gamma_M\right>_v$ and $\bJ_\mu$ is as defined in (\ref{Jmudef}).
\end{theorem}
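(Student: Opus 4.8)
The plan is to realize $\Theta^\star$ as the coefficient vector produced by a standard Galerkin projection of the linear PDE governing the nonlinear part $h$, namely the second equation in (\ref{linear_nonlinear_eig}), onto the finite-dimensional trial space $\cF_M={\rm span}\{\gamma_1,\ldots,\gamma_M\}$. First I would posit the ansatz $h_M(\bz)=\Gamma_M^\top(\bz)\Theta=\sum_{j=1}^M\theta_j\gamma_j(\bz)$ and substitute it into the left-hand side of (\ref{linear_nonlinear_eig}) to form the residual. Because differentiation and the PDE operator act linearly on $h_M$, this residual collapses to
\begin{align}
r_\Theta(\bz)=\sum_{j=1}^M\theta_j\left(\frac{\partial\gamma_j}{\partial\bz}\bF-\lambda\gamma_j\right)+\bw^\top\bF_n(\bz)=\Theta^\top\left(\frac{\partial\Gamma_M}{\partial\bz}\bF-\lambda\Gamma_M\right)+\bw^\top\bF_n.
\end{align}
The Galerkin (optimality) condition is that $r_\Theta$ be $\cL_2(\mu)$-orthogonal to every element of the approximation space, i.e. $\left<r_\Theta,\gamma_i\right>_{\cL_2(\mu)}=0$ for $i=1,\ldots,M$, which is exactly the defining property of the Galerkin projection of $h$ onto $\cF_M$ and justifies calling $\Gamma_M^\top\Theta^\star$ the \emph{optimal} projection.

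Carrying out this orthogonality row by row gives, for each $i$,
\begin{align}
\sum_{j=1}^M\theta_j\left<\frac{\partial\gamma_j}{\partial\bz}\bF-\lambda\gamma_j,\gamma_i\right>_{\cL_2(\mu)}+\left<\bw^\top\bF_n,\gamma_i\right>_{\cL_2(\mu)}=0.
\end{align}
The matrix collecting the first family of inner products is precisely $\bJ_\mu$ as defined in (\ref{Jmudef}), and the vector collecting the second is $\bb_\mu=\left<\bw^\top\bF_n,\Gamma_M\right>_v$; hence the $M$ conditions assemble into $\bJ_\mu\Theta+\bb_\mu=0$, which is the claimed linear system $\bJ_\mu\Theta=-\bb_\mu$. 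This step is essentially bookkeeping: the only care needed is matching the entry $(i,j)$ of $\bJ_\mu$ to the correct inner product, identical to the arrangement already used in the proof of Lemma \ref{lemma_galkerin}.

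Existence and uniqueness of $\Theta^\star$ then follow immediately from Lemma \ref{lemma_galkerin}: under Assumption \ref{assume_basisfunction} the matrix $\bJ_\mu\in\mR^{M\times M}$ has full rank $M$, hence is invertible, so $\Theta^\star=-\bJ_\mu^{-1}\bb_\mu$ is the unique solution. The substantive analytic content is therefore already discharged by the rank argument in the Lemma, and I expect the main obstacle of the theorem proper to be the well-definedness question rather than any new estimate: one must ensure that $\frac{\partial\gamma_j}{\partial\bz}\bF-\lambda\gamma_j$ and $\bw^\top\bF_n$ lie in $\cL_2(\mu)$ over the restricted domain $\cZ$ so that every inner product above is finite and the substitution of the finite ansatz is legitimate. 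This integrability is precisely what is secured by working in $\cF=\cL_2(\mu)$ together with the membership hypothesis $\frac{\partial\gamma_j}{\partial\bz}\bF-\lambda\gamma_j\in\cG(\Gamma,\cZ)$ invoked in the Lemma, so once that is noted the proof closes.
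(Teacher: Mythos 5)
Your proposal is correct and is essentially the paper's own proof: form the residual by substituting the ansatz $\Gamma_M^\top\Theta$ into the PDE for $h$ in (\ref{linear_nonlinear_eig}), impose $\cL_2(\mu)$-orthogonality of the residual to the span of $\Gamma_M$ to obtain $\bJ_\mu\Theta=-\bb_\mu$, and invoke Lemma \ref{lemma_galkerin} for invertibility of $\bJ_\mu$ and hence uniqueness of $\Theta^\star=-\bJ_\mu^{-1}\bb_\mu$. Your added remark on integrability of the inner products is a minor refinement the paper leaves implicit, not a different route.
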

\begin{proof}
We seek to find a finite-dimensional approximation of $h(\bz)$ using the finite basis function $ \Gamma_M(\bz)$ as 
\[h(\bz)\approx \sum_k^M  u_k \bar \gamma_k(\bz)=\Gamma_M(\bz)^\top \Theta.\]
Substituting the above expression in (\ref{linear_nonlinear_eig}), we can write the approximation error as 
\begin{align}
{\rm error}=(\frac{\partial  \Gamma_M}{\partial \bz}\bF-\lambda \Gamma_M(\bz))^\top\Theta+\bw^\top \bF_n(\bz).
\end{align}
The objective is to determine the vector $\Theta$ such that the projection of the error on the finite basis $\{\gamma_k\}_{k=1}^M$  is equal to zero for all $\bz$. Hence, we obtain
\[\left<\left(\frac{\partial  \Gamma_M}{\partial \bz}\bF-\lambda  \Gamma_M(\bz)\right),\Gamma_M\right>_m\Theta=-\left<\bw^\top \bF_n(\bz), \Gamma_M\right>_v.\]
where the inner product is in $\cL_2(\mu)$. The uniqueness of solution follows from Lemma \ref{lemma_galkerin}, where the matrix $\bJ_\mu=\left<(\frac{\partial  \Gamma_M}{\partial \bz}\bF-\lambda  \Gamma_M(\bz)),\Gamma_M\right>_m$ is proved to have rank $M$ and hence invertible. The optimal solution $\Theta^\star$ is given by
\begin{align}\Theta^\star=-{\bJ_\mu}^{-1}\bb_\mu\label{solution1}\qed
\end{align}
\end{proof}

In general, computing the integral and inner product in $\cL_2(\mu)$ would be computationally expensive. Instead, the inner product can be replaced by a dot product using empirical measure. Let $\{\bz_1,\ldots, \bz_L\}$ be the finite data points drawn independent identically distributed (i.i.d) random w.r.t. $\mu$ from $\cZ$. Let $\hat \mu_L$ be the associated empirical measure i.e., $\hat \mu_L=\frac{1}{L}\sum_k^L \delta_{\bz_k}$, with $\delta_{\bz_k}$ the Dirac-delta measure. We then have $\int_\cZ f(\bz)d\hat \mu_L=\frac{1}{L}\sum_k^L f(\bz_k)$. With the use of this empirical measure, we can write the Eq. (\ref{gproj}) to determine the approximate coefficient vector $\Theta_L$ as the solution of finite linear equations.
\begin{align}
&\underbrace{\frac{1}{L}\sum_{k=1}^L \Gamma_M(\bz_k)\left(\frac{\partial  \Gamma_M}{\partial \bz}(\bz_k)\bF(\bz_k)-\lambda  \Gamma_M(\bz_k)\right)^\top}_{\bJ_{\hat \mu_L}}  \Theta_L\nonumber\\&=-\underbrace{\frac{1}{L}\sum_{k=1}^L \bw^\top \bF_n(\bz_k) \Gamma_M(\bz_k)}_{\bb_{\hat \mu_L}}\label{compute_eig}.
\end{align}
 Following Assumption \ref{assume_basisfunction}, the matrix $\bJ_{\hat\mu_L}$ will be invertible with probability one for $L\geq M$ as the samples $\{\bz_k\}_1^L$ are drawn i.i.d. The solution for $\Theta_L$ is given by 
 \begin{align}
 \Theta_L^\star=-\left(\bJ_{\hat \mu_L} \right)^{-1}\bb_{\hat \mu_L}\label{optforeigenfunction}.
 \end{align}
 We have following results.
 \begin{lemma}\label{lemma_eigenfunction} Let Assumption \ref{assume_basisfunction} hold true and
 $\psi_\lambda^{ML}(\bz)= \bw^\top \bz+ \Gamma_M^\top(\bz)\Theta_L^\star$ and $\psi_\lambda^{M}(\bz)= \bw^\top \bz+ \Gamma_M^\top(\bz)\Theta^\star$, where $\Theta^\star$ and $ \Theta_L^\star$ are the solutions of (\ref{solution1}) and (\ref{optforeigenfunction}) respectively. Then,
 \begin{align}
 \lim_{L\to \infty}\|\psi_\lambda^{ML}-\psi_\lambda^{M}\|=0\label{convergence_eigen},
 \end{align}
 where $\|\cdot\|$ is any norm on $\cF_M$.
 \end{lemma}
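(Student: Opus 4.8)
The plan is to reduce the claimed functional convergence to the convergence of the finite coefficient vectors $\Theta_L^\star \to \Theta^\star$ in $\mR^M$, and then to establish the latter by combining the strong law of large numbers with the continuity of matrix inversion at an invertible point.

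First I would observe that
\[
\psi_\lambda^{ML}(\bz) - \psi_\lambda^{M}(\bz) = \Gamma_M^\top(\bz)\big(\Theta_L^\star - \Theta^\star\big).
\]
Since $\cF_M = \mathrm{span}\{\gamma_1,\ldots,\gamma_M\}$ is finite-dimensional and the $\gamma_j$ are linearly independent (Assumption \ref{assume_basisfunction}), the linear map $\Theta \mapsto \Gamma_M^\top \Theta$ is an isomorphism from $\mR^M$ onto $\cF_M$, and all norms on $\cF_M$ are equivalent. Hence there is a constant $C$ with $\|\psi_\lambda^{ML}-\psi_\lambda^{M}\| \le C\,\|\Theta_L^\star - \Theta^\star\|_2$, so it suffices to prove $\Theta_L^\star \to \Theta^\star$.

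Next I would invoke the strong law of large numbers. Because the samples $\{\bz_k\}$ are drawn i.i.d.\ according to $\mu$, and the integrands defining $\bJ_\mu$ and $\bb_\mu$---namely the entries of $\Gamma_M(\bz)\big(\frac{\partial\Gamma_M}{\partial\bz}\bF - \lambda\Gamma_M\big)^\top$ and $\bw^\top\bF_n(\bz)\Gamma_M(\bz)$---are continuous and $\mu$-integrable (which holds, e.g., when $\cZ$ is compact and $\bF,\Gamma_M$ are $\cC^1$), each empirical average converges almost surely to its expectation. Thus $\bJ_{\hat\mu_L}\to\bJ_\mu$ and $\bb_{\hat\mu_L}\to\bb_\mu$ entrywise, almost surely, as $L\to\infty$.

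Finally, Lemma \ref{lemma_galkerin} gives $\mathrm{rank}(\bJ_\mu)=M$, so $\bJ_\mu$ is invertible. Since the invertible matrices form an open set and $A\mapsto A^{-1}$ is continuous there, the almost-sure convergence $\bJ_{\hat\mu_L}\to\bJ_\mu$ implies that $\bJ_{\hat\mu_L}$ is eventually invertible (consistent with the stated fact that it is invertible with probability one for $L\ge M$) and that $\bJ_{\hat\mu_L}^{-1}\to\bJ_\mu^{-1}$. Combining this with $\bb_{\hat\mu_L}\to\bb_\mu$ yields
\[
\Theta_L^\star = -\bJ_{\hat\mu_L}^{-1}\bb_{\hat\mu_L} \longrightarrow -\bJ_\mu^{-1}\bb_\mu = \Theta^\star,
\]
which gives the conclusion in the almost-sure sense. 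The main obstacle is not any single step but the careful handling of the almost-sure statements: one must verify that the relevant functions genuinely lie in $\cL_2(\mu)$ (hence in $\cL_1(\mu)$) so the law of large numbers applies, and one must discard the measure-zero event on which $\bJ_{\hat\mu_L}$ fails to be invertible, restricting attention to large $L$ on the full-measure event where both the convergence $\bJ_{\hat\mu_L}\to\bJ_\mu$ and the eventual invertibility hold.
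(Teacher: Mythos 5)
Your proposal is correct and follows essentially the same route as the paper's own proof: reduce to convergence of the coefficient vectors after the linear parts cancel, then apply the strong law of large numbers together with continuity of matrix inversion at the invertible matrix $\bJ_\mu$ guaranteed by Lemma \ref{lemma_galkerin}. The paper states this argument in one compressed step, while you make explicit the supporting details it leaves implicit (norm equivalence on the finite-dimensional space $\cF_M$, $\mu$-integrability of the integrands for the law of large numbers, and the handling of the almost-sure eventual invertibility of $\bJ_{\hat\mu_L}$).
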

\begin{proof} Since the linear part, $\bw^\top \bz$ of the principal eigenfunction is common in both $\psi_{\lambda}^M$ and $\psi_\lambda^{ML}$, we only need to show (\ref{convergence_eigen}) for the nonlinear terms. We have by strong law of large number 
\begin{align}\lim_{L\to \infty} \Gamma_M^\top(\bz) \bJ_{\hat \mu_L}^{-1}\bb_{\hat \mu_L}=\Gamma_M^\top (\bz)\bJ_\mu^{-1}\bb_\mu.\label{lssolution}
\end{align}
with probability one since the matrix inverse operation is continuous and the samples are drawn i.i.d. \qed
\end{proof}
\begin{remark}\label{remark_convergencerate} Since the samples, $\{\bz_k\}$, are drawn i.i.d. random in the approximation of $\psi_\lambda^M$, the law of large number can be applied to study the convergence rate of $\psi_{\lambda}^{ML}\to \psi_\lambda^M$ as a function of data size $L$. Using the central limit theorem, this rate of convergence will be proportion to $\frac{1}{\sqrt{L}}$ \cite{schervish2014probability}.
\end{remark}

\section{Simulation Results}\label{section_simulation}

This section presents numerical examples based on procedures one and two for approximating the Lagrangian submanifolds. All the simulation codes are developed in  MATLAB  and run on a computer with 16 GB of RAM and a 3.8 GHz  Intel Core i7  processor. The total simulation time for each example was less than a minute. 
\subsection{Example 1} 
For our first example, procedures one and two can be carried out analytically as the principal (generalized) eigenfunctions of the uncontrolled system, and the Hamiltonian system can be computed analytically. The control dynamics of the two-dimensional example are given as follows. 
\begin{align}
\begin{pmatrix}\dot x_1\\\dot x_2\end{pmatrix}=\alpha\begin{pmatrix}-\cos x_2(x_1-2x_2)+4(x_1+\sin x_2)\\x_1-2x_2+2(x_1+\sin x_2)\end{pmatrix}+\begin{pmatrix}1\\0\end{pmatrix}u,\label{example1vf}
\end{align}
where $\alpha=\frac{1}{\cos  x_2+2}$.
We consider the quadratic state cost $q(\bx) = \frac{1}{2}((x_1-2x_2)^2+(x_1+\sin x_2)^2)$, and
 the control cost to be $\frac{1}{2}u^2$. For procedure one, we work with the principal eigenfunctions of the uncontrolled system which along with the eigenvalues are given by
 \begin{align}
    \Psi(\bx)= \begin{bmatrix}
        \phi_1\\
        \phi_2
    \end{bmatrix} = \begin{bmatrix}
        x_1-2 x_2\\
        x_1+\sin x_2
    \end{bmatrix},\;\;\;
    \Lambda=\begin{bmatrix}
        -1 & 0\\
         0  & 2 
    \end{bmatrix}.
\end{align}
In Fig. \ref{fig:eigsEx111}, we show the plots of the principal eigenfunctions of the uncontrolled system. 
Approximation \ref{approximation} made in procedure one for  this example is not an approximation as 
\[\bR(\bx)=\bB\bB^\top=\begin{pmatrix}1&0\\0&0\end{pmatrix},\frac{\partial \Phi}{\partial \bx}=\begin{pmatrix}1&-2\\1&\cos x_2\end{pmatrix},q(\bx)=\frac{1}{2}\Phi^\top\Phi,\]
and hence
\[\frac{\partial \Phi}{\partial \bx}\bB \bB^\top \frac{\partial \Phi}{\partial \bx}^\top=\begin{pmatrix}1&1\\1&1\end{pmatrix}=\bR_1, \;\;\bQ_1=\begin{pmatrix}1&0\\0&1\end{pmatrix}\]
are indeed  constant matrices. 
Based on the steps outlined for procedure one, the Lagrangian submanifold and the optimal cost function are given by
\begin{align}\bp^\star=\frac{\partial V}{\partial \bx}, \;\;V(\bx)=\frac{1}{2}\Phi^\top\bL \Phi\label{optimal_costex1}.
\end{align}
where, $\bL$ is obtained as the solution of Riccatti equation
(\ref{L_riccatti}) and the optimal control input equal to 
\begin{align}\bL= \begin{pmatrix}    0.49&  -0.62\\
   -0.62&    5.35\end{pmatrix}
,\bu^\star=- 4.61x_1 -0.263x_2 - 4.74\sin x_2\label{optimalcontrolex1}.
\end{align}
For procedure two, the eigenvalues of the Hamiltonian are given by $(2.14,1.19,-2.14,-1.19)$, and the principal eigenfunctions corresponding to the unstable eigenvalues are given by 
\[\begin{pmatrix}-0.20 \phi_1(\bx)-0.67 \phi_2(\bx)+0.67 P_1+0.20 P_2\\
-0.39 \phi_1(\bx)-0.11 \phi_2(\bx)+0.90 P_1+0.08 P_2\end{pmatrix},\]
where $(P_1,P_2)^\top=\bP=(\frac{\partial \Phi}{\partial \bx}^\top)^{-1}\bp$. We notice that the principal eigenfunctions of the Hamiltonian system are obtained as linear combinations of the principal eigenfunctions of the uncontrolled system. This will always be the case when the Approximation \ref{approximation} is no longer an approximation but is exact, as is the case with this example. 
The optimal cost and control for procedures 1 and 2 are the same for this example and are given in Eqs. (\ref{optimal_costex1}) and   (\ref{optimalcontrolex1}) respectively. In Fig. \ref{fig:v2Ex1}, we show the plot for the optimal cost and control input obtained using procedures 1, 2, and LQR control. The optimal cost and control input using LQR control is given by
\begin{align}
u_{lqr}=- 5.06 x_1 - 5.74 x_2,\;V_{lqr}=\frac{1}{2}\bx^\top\begin{pmatrix}2.53&    2.87\\
    2.87&    7.59\end{pmatrix}\bx.
\end{align}
\vspace{-0.2in}
\begin{figure}[htbp]
\centering
\includegraphics[width=3.5in]{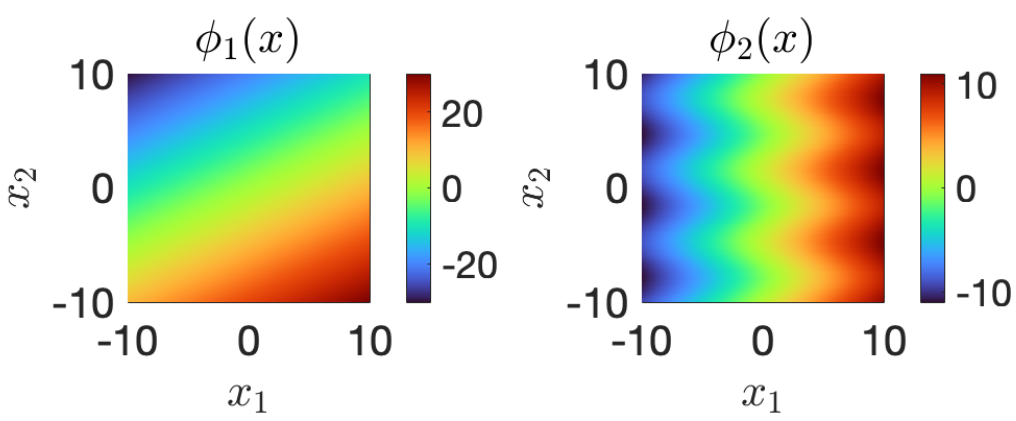}
\vspace{-0.2in}
\caption{Principal Eigenfunctions $\phi_1(\bx),\phi_2(\bx)$ of the uncontrolled system}
\label{fig:eigsEx111}
\end{figure}

\begin{figure}[htbp]
\centering
\includegraphics[width=3.4in]{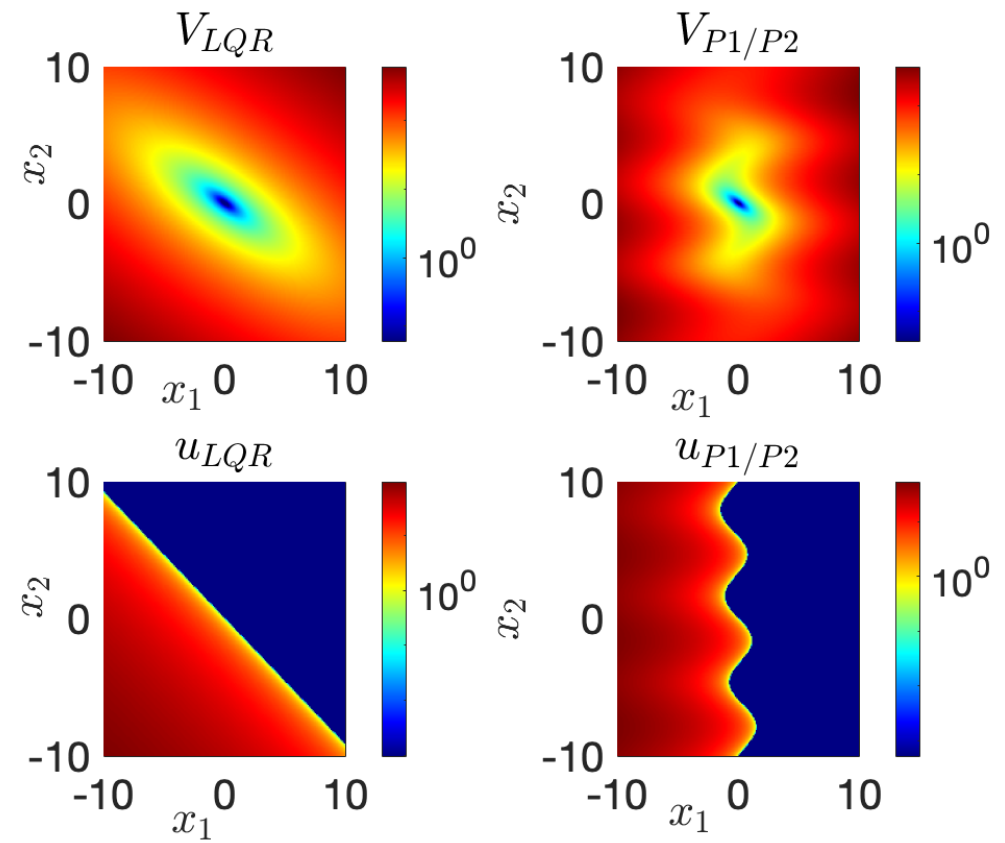}
\vspace{-0.05in}
\caption{Value function of LQR control and Proc. 1 and Proc.2 (top). Feedback contorol law for LQR, Proc. 1, and Proc.2 (bottom).}
\label{fig:v2Ex1}
\end{figure}
We compare the analytical results presented above with those obtained using our proposed framework. In particular, we use the approach outlined in Section \ref{section_compute} to approximate the Koopman principal eigenfunctions and the HJ solution. We also compare the results obtained using our proposed framework with one obtained using the Taylor series approximation \cite{navasca2000solution,al1961optimal}. We use the polynomial basis function to approximate the Koopman principal eigenfunctions for a fair comparison with the Taylor series-based solution. For the Taylor series approximation of the HJ solution, we Taylor expand the vector field (\ref{example1vf}) around the origin as 
\[\dot \bx=\bff(\bx)+\bB u=\bA\bx+\bF_2(\bx)+\bF_3(\bx)+\cdots+\bB u.\]
where $\bF_j(\bx)$ is the part of the vector field containing nonlinear terms of order $j$. Similarly, the value function, $V(\bx)$, the control input, $k(\bx)$, and the cost function, $q(\bx)$, are expanded in Taylor series as follows:
\begin{align*}V(\bx)=\sum_{j=2}^\infty V_j(\bx),\;u=\bK\bx+\sum_{j=2}^\infty k_j(\bx),\;q(\bx)=\sum_{j=2}^\infty q_j(\bx)
\end{align*}
where $V_2(\bx)=\bx^\top \bP \bx$ and  $\bK=-\bB^\top \bP$ with $\bP$ being the solution of Riccatti equation. $V_j(\bx)$ and $k_j(\bx)$ are the cost function and control input containing nonlinear terms of order $j$. In \cite[Eq. 2.21]{navasca2000solution} \cite[Eq. 2.9]{al1961optimal}, it has been  shown that the $V_{j>2}(\bx)$ satisfy a recursive equation, where $V_j$ is function of $V_k(\bx)$ with $k<j$, $\bF_\ell(\bx)$ for $\ell\leq j$ and $q_j(\bx)$. For more details on this derivation, refer to \cite{navasca2000solution,al1961optimal}. In Fig. \ref{fig_eigenerror}, we show the normalized error plot between the analytical Koopman principal eigenfunction, $\phi_2(\bx)$, of the uncontrolled system and the one, approximated from data with finite basis, $\phi_{2}^{ML}(\bx)$, i.e., $\frac{\|\phi_{2}^{ML}-\phi_2\|}{\|\phi_2\|}$ as the function of data length, $L$. The results are presented using the boxplots where the red line denotes the mean and the $75^{th}$ and $25^{th}$ quartiles are represented by the upper and lower edges of the black box, and the red stars represent the outliers.
\vspace{-0.0in}
\begin{figure}
\vspace{-0.2in}
  \begin{center}
    \includegraphics[width=0.3\textwidth]{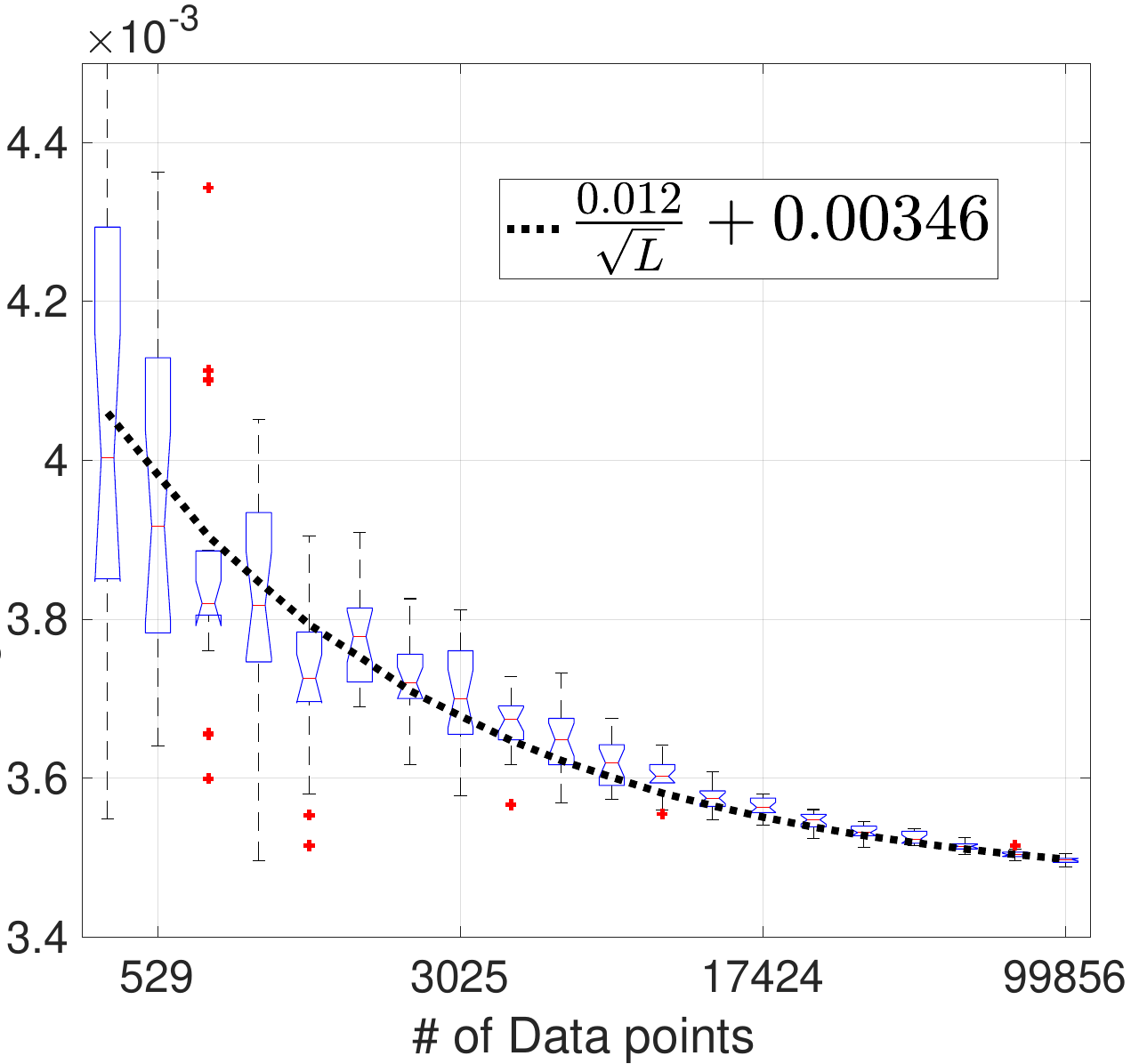}
  \end{center}
  \vspace{-0.1in}
  \caption{Principal Eigenfunction error vs data length}\label{fig_eigenerror}
\end{figure}
We choose the polynomial basis function of degree five and hence $M=18$.
The plot shows the error decays at the rate of $\frac{1}{\sqrt{L}}$ (Remark \ref{remark_convergencerate}) and validates the results of Lemma \ref{lemma_eigenfunction}. Now, let $V^\star(\bx)$, $V_{T}(\bx)$, and $V_{K}$ be the analytical, Taylor series-based, and Koopman spectrum-based approximated HJ solutions, respectively. In Fig. \ref{fig_valuecomparsion} we show the plot for the point-wise error between $|V^\star(\bx)-V_{T}(\bx)|$ and $|V^\star (\bx)-V_{K}(\bx)|$. The error with the Taylor series-based approximated solution is an order of magnitude higher than the proposed Koopman spectrum-based HJ solution. This demonstrates the effectiveness of the developed framework over the solution obtained using the Taylor series. 

\begin{figure}[htbp]
\includegraphics[width=3.7in]{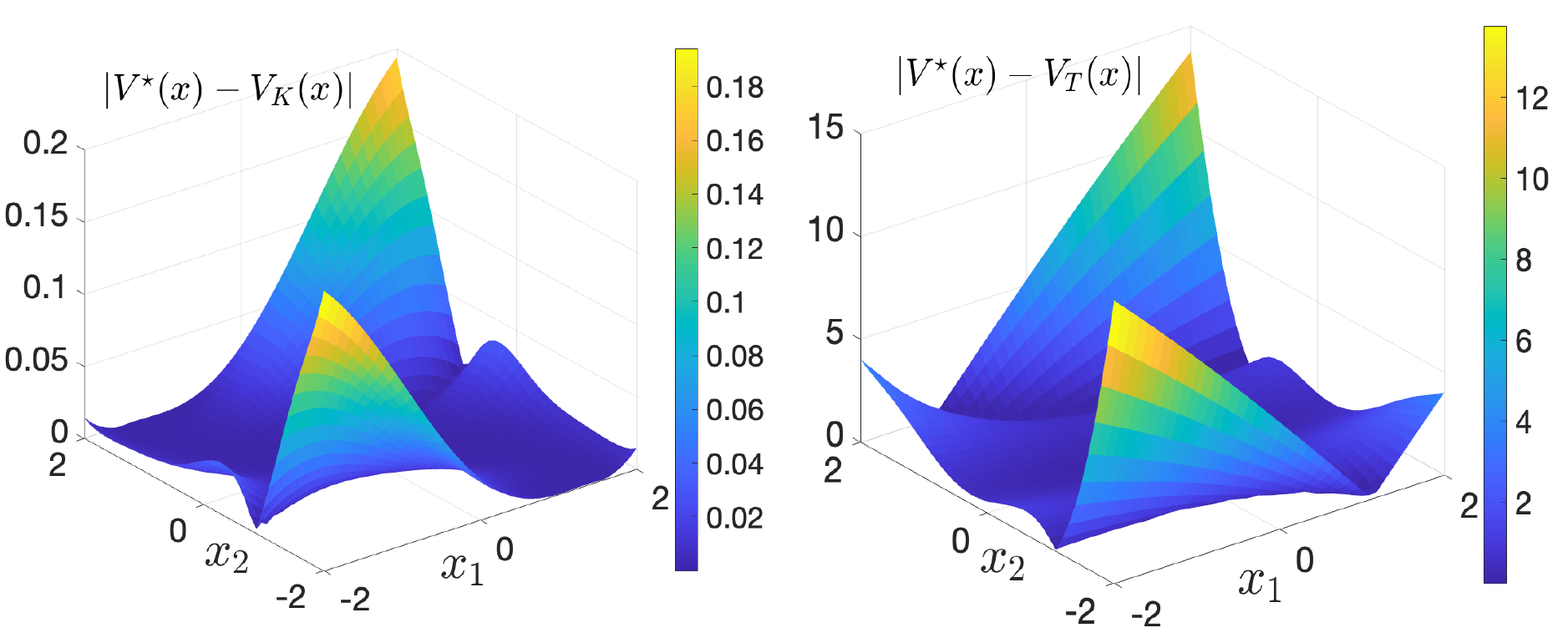}
\vspace{-0.1in}
\caption{Comparison of optimal cost function error: $|V^\star(\bx)-V_K(\bx)|$ (left) and $|V^\star(\bx)-V_T(\bx)|$ (right).}
\label{fig_valuecomparsion}
\end{figure}
\vspace{-0.2in}

\subsection{Example 2}
Our next example is an inverted pendulum on a cart. After removing the cart position state, the reduced order dynamics while retaining angular position, $\theta$, angular velocity, $\psi$, of the pendulum and cart velocity $\vartheta$ as states are of the form.
\begin{align}
\dot \theta&=\psi\nonumber\\
\begin{pmatrix} \dot \psi\\\dot \vartheta \end{pmatrix}&=\bM^{-1}\begin{pmatrix}-b\vartheta +m\ell \psi^2 \sin(\theta-\pi)+u\\-mg\ell \sin(\theta-\pi)\end{pmatrix}
\end{align}
where
\[\bM:=\begin{pmatrix}
m\ell\cos (\theta-\pi)&(M+m)\\(I+m\ell^2)&m\ell\cos(\theta-\pi).
\end{pmatrix}\]
Let $\bx=(\theta,\psi,\vartheta)^\top$. The parameter values are chosen to be 
$M=0.5 {\rm kg}, m=0.2 {\rm kg}, b=0.1 {\rm N/m/sec}, l= 0.3 {\rm m}, I =     0.006 {\rm kg.m^2}$. The objective is to stabilize the unstable equilibrium point at the origin optimally. We use procedure one to design the optimal control for this example, which requires computing the eigenfunctions of the uncontrolled system. The cost function is chosen to be a quadratic function of states $\bx^\top \bx$ and control input $u^2$. We used the methodology developed in Section \ref{section_compute} to approximate the principal eigenfunctions. We used $1e^4$ initial condition uniformly distributed over the domain $[-3,3]\times [-5,5]\times[-5,5]$ to approximate the principal eigenfunctions. The polynomial basis functions of maximum degree two are used to approximate the Koopman principal eigenfunctions of the uncontrolled system. Figures \ref{figiv1}-\ref{figiv4},  show the plots for the trajectories of the closed-loop system starting from  10 different initial conditions distributed around $\bx=(0.7, -4.2,6.2)^\top$ and the corresponding control inputs. We compare the closed-loop system trajectories obtained using our proposed approach against the linear quadratic regulator and pole placement-based controllers. These comparison plots show that the controller obtained using our proposed approach can successfully stabilize the unstable equilibrium point at the origin. In contrast, the controller obtained using LQR and pole-placement approach failed to stabilize the equilibrium point. 
\begin{figure}[htbp]
\centering
\includegraphics[width=2.8in]{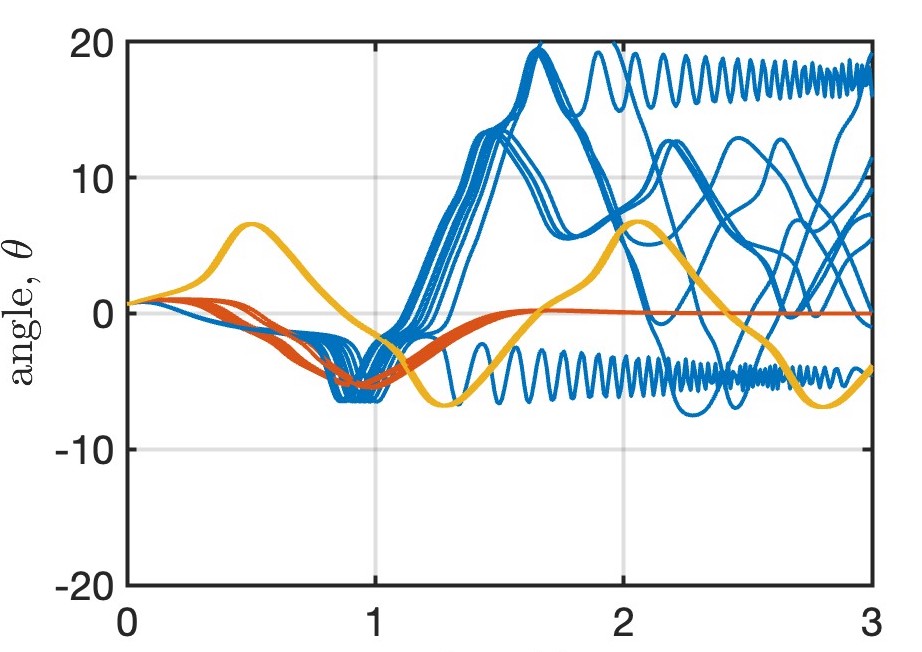}
\vspace{-0.05in}
\caption{Comparison of angular position  obtained using Procedure one (red), LQR controller (blue), and pole placement controller (yellow) from multiple initial conditions.}
\label{figiv1}
\end{figure}
\vspace{-0.1in}
\begin{figure}[htbp]
\centering
\includegraphics[width=2.8in]{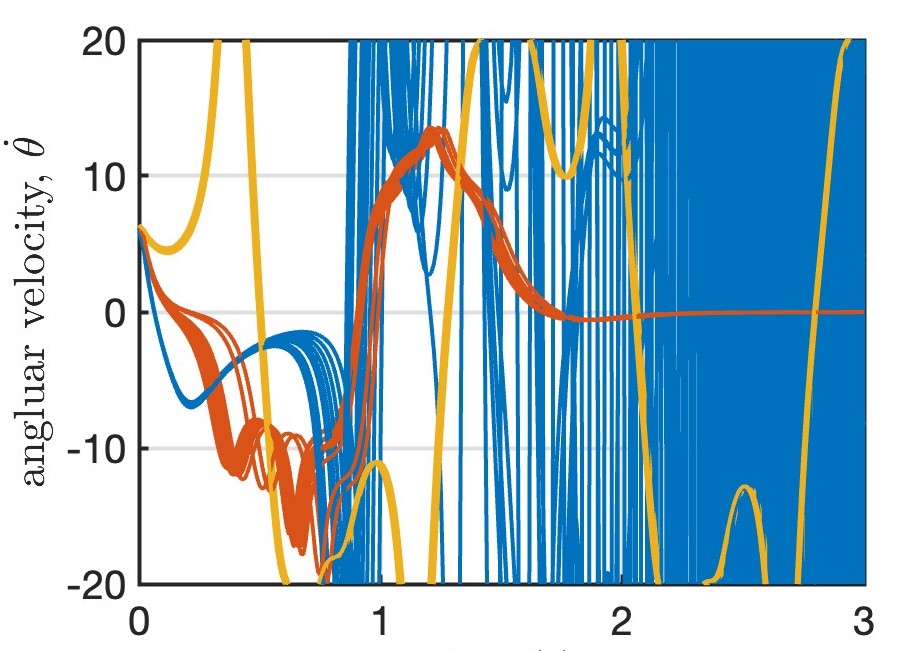}
\vspace{-0.05in}
\caption{Comparison of angular velocity  obtained using Procedure one (red), LQR controller (blue), and pole placement controller (yellow) from multiple initial conditions.}
\label{figiv2}
\end{figure}
\vspace{-0.1in}
\begin{figure}[htbp]
\centering
\includegraphics[width=2.8in]{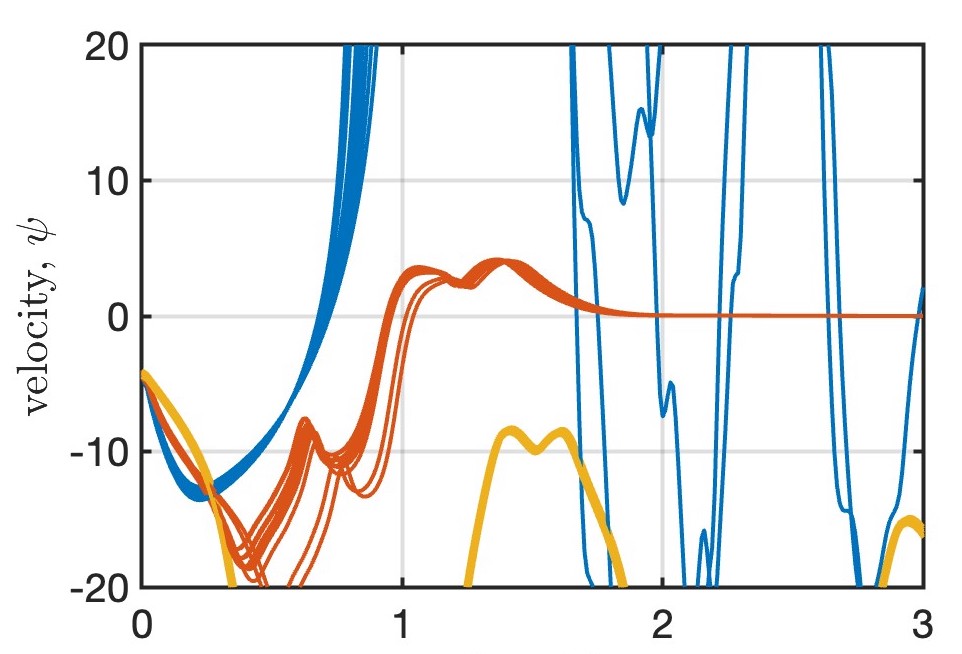}
\vspace{-0.05in}
\caption{Comparison of cart velocity  obtained using Procedure one (red), LQR controller (blue), and pole placement controller (yellow) from multiple initial conditions.}
\label{figiv3}
\end{figure}
\vspace{-0.1in}
\begin{figure}[htbp]
\centering
\includegraphics[width=2.8in]{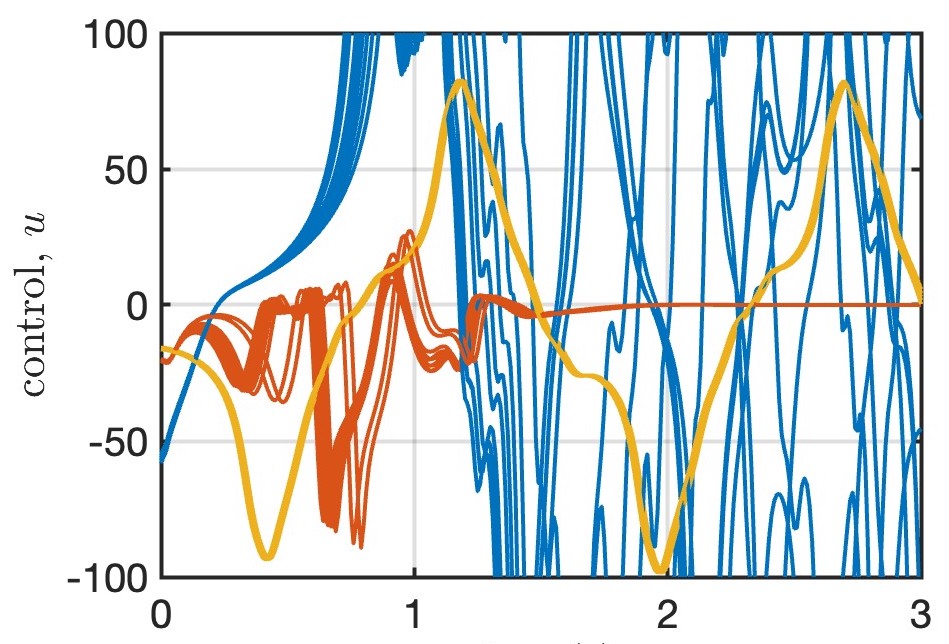}
\vspace{-0.05in}
\caption{Comparison of  control inputs obtained using Procedure one (red), LQR controller (blue) and, pole placement controller (yellow) from multiple initial conditions.}
\label{figiv4}
\end{figure}
\section{Conclusions}\label{section_conclusions}
This paper combines tools from linear Koopman operator theory and differential geometry to provide a novel approach for the approximate computation of the HJ solution. We present two different procedures based on the spectrum of the Koopman operator for the computation of the Lagrangian submanifold, which are instrumental in solving the HJ equation.  
Furthermore, given the significance of the HJ equation in various problems in systems theory, the proposed methodology involving the Koopman operator can be extended in several different directions, including analysis and synthesis of a control system and robust control. The proposed approach is also the most promising direction for extending existing results in linear control theory to the nonlinear control system by exploiting the spectral properties of the Koopman operator. Future research efforts will focus on implementing the two procedures in the data-driven setting. 
\vspace{-0.1in}
\section*{Appendix}



\noindent\begin{proof}
({\it Proof of Proposition \ref{Proposition_Lagsubspace}}).
Following Assumption  \ref{assumption_hamiltonianeigenvalues}, we know that the Hamiltonian matrix, $\cal H$, has no eigenvalues on the $j\omega$ axis. Let $\bd_1^\top,\ldots,\bd_n^\top$ be the left eigenvectors associated with eigenvalues with positive real part. Following Corollary \ref{proposition_mainfolds}, the  stable subspace of the linear system is characterized by the joint zero-level sets of Koopman principal eigenfunctions corresponding to eigenvalues with a positive real part, and hence the stable subspace is characterized as
\[\mE_s=\{(\bar  \bX,\bar\bP): \bD\begin{pmatrix}\bar \bX\\\bar \bP\end{pmatrix}=\bD_1\bar \bX+\bD_2 \bar \bP=0\},\]
where the matrix $\bD,\bD_1,\bD_2$ are as defined in (\ref{D_def}). We seek the parameterization of the stable subspace in terms of $\bar \bX$, and hence we write $\bar \bP=\bL\bar \bX$ for some symmetric matrix $\bL$. Symmetric $\bL$ follows from the fact that we seek parameterization of the form $\bar\bP=\frac{\partial V(\bar \bX)}{\partial \bar \bX}$, i.e., a gradient of a scalar value function $V=\frac{1}{2}\bar \bX^\top \bL\bar \bX$. Substituting for $\bar \bP$ in the stable subspace equation, we obtain
\begin{align}\bD_1\bar \bX+\bD_2 \bL\bar \bX=(\bD_1 +\bD_2 \bL)\bar\bX=0.
\end{align}
Since the above has to be true for all $\bar \bX$, we need $\bD_1+\bD_2 \bL=0$. Following Assumption \ref{assumption_hamiltonianeigenvalues},  (\ref{assume_complimentary}) is equivalent to
\begin{align}
\mE_s^\perp\oplus \begin{pmatrix}I\\0\end{pmatrix}=\mR^{2n}.\label{condition}
\end{align}
Now since matrix $\bD$ span the space orthogonal to $\mE_s$ i.e., $\mE_s^\perp$,  (\ref{condition}) is equivalent to the invertability of $\bD_2$ matrix
Hence, we obtain $\bL=\bD_2^{-1}\bD_1$. This gives us the required expression for the Lagrangian subspace in $\bar\bX,\bar\bP$ coordinates. 
\qed
\end{proof}

\begin{proof}({\it Proof of Proposition \ref{prop_riccati_lagsubspace}})
Since, the matrix $\bD$ from Proposition \ref{Proposition_Lagsubspace} consists of left eigenvectors with eigenvalues with positive real part, say $\bar \Lambda$, of the Hamiltonian $\cal H$, it forms an invariant subspace and satisfies
\begin{align}
\begin{pmatrix}\bD_1&\bD_2\end{pmatrix}\begin{pmatrix}\Lambda&-\hat \bR\\-\hat \bQ&-\Lambda^\top\end{pmatrix}=\bar \Lambda\begin{pmatrix}\bD_1&\bD_2\end{pmatrix}.    \end{align}

Premultiplying by $\bD_2$ inverse, we obtain
\begin{align}
\begin{pmatrix}-\bL&I\end{pmatrix}\begin{pmatrix}\Lambda&-\hat \bR\\-\hat \bQ&-\Lambda^\top\end{pmatrix}=\bD_2{-1}\bar \Lambda\bD_2\begin{pmatrix}-\bL&I\end{pmatrix}.  
\end{align}
Postmultiply by $\begin{pmatrix}I\\\bL\end{pmatrix}$, we obtain
\begin{align}
    \bL \Lambda +\Lambda^\top \bL-\bL\hat \bR\bL+\hat \bQ=0\label{riccattiL}.
\end{align}
and 
\begin{align*}
    \Lambda - \hat \bR \bL 
    &= -\bD_2^\top \bar \Lambda^\top \left(\bD_2^{-1}\right)^\top 
    \implies \sigma(\Lambda - \hat \bR \bL) &= \sigma(-\bar \Lambda). \qed
\end{align*}
\end{proof}
\bibliographystyle{IEEEtran}
\bibliography{ref1,ref}
\vspace{-0.1in}
\begin{IEEEbiography}[{\includegraphics[width=1in,height=1.25in,clip,keepaspectratio]{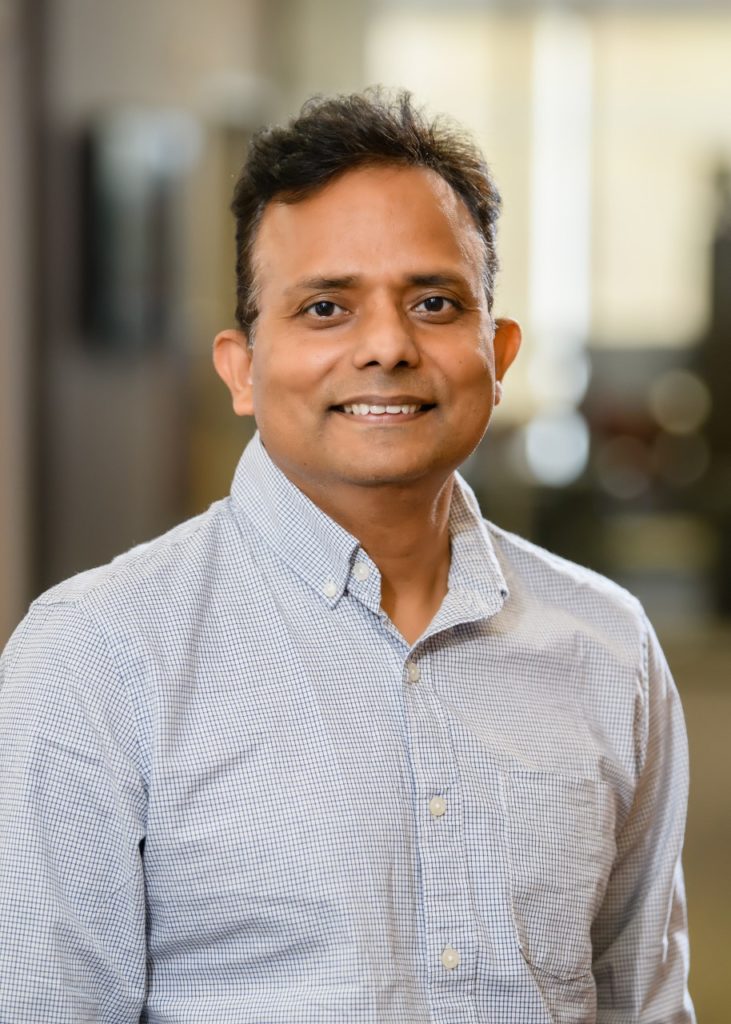}}]%
{Umesh Vaidya}(M’07, SM'19)  received the Ph.D. degree in mechanical engineering from the University of California at Santa Barbara, Santa Barbara, CA, in
2004. He was a Research Engineer at the United Technologies Research Center (UTRC), East Hartford, CT, USA. He is currently a professor in the Department of Mechanical Engineering, Clemson University, S.C., USA. Before joining Clemson University in 2019, and since 2006, he was a faculty with the department of Electrical and Computer Engineering at Iowa State University. He is the recipient of 2012 National Science Foundation CAREER award. His current research interests include dynamical systems and control theory with applications to power grid and robotics. 
\end{IEEEbiography}

\end{document}